\title{Permuton limits for some permutations avoiding a single pattern}
\newtheorem{theorem}{Theorem}[section]
\newtheorem{lemma}[theorem]{Lemma}
\newtheorem{proposition}[theorem]{Proposition}
\newcommand{\utri}{W^+}
\newcommand{\ltri}{W^-}
\newcommand{\souw}{\text{SW}}
\tikzstyle{vertex}=[circle, draw, inner sep=0pt, minimum size=6pt]
\tikzstyle{Vertex}=[circle, draw, inner sep=0pt, minimum size=14pt]
\tikzstyle{Vertexc}=[circle, draw, inner sep=0pt, minimum size=14pt, fill=blue!30]
\tikzstyle{vertexc}=[circle, draw, inner sep=0pt, minimum size=6pt, fill=red!40]
\tikzstyle{vertexcg}=[circle, draw, inner sep=0pt, minimum size=6pt, fill=green!70!black]
\newcommand{\beq}{\begin{equation*}}
\newcommand{\eeq}{\end{equation*}}
\newcommand{\ba}{\begin{align*}}
\newcommand{\ea}{\end{align*}}
\newcommand{\field}[1]{\mathbb{#1}}
\newcommand{\matbegin}[1]{\left (  \begin{array} {#1} }
\newcommand{\matend}{ \end{array} \right ) } 
\newcommand{\prob}{\field{P}}
\newcommand{\avn}[1]{\textbf{Av}_n(#1)}
\newcommand{\avne}[2]{\textbf{Av}_{n, #2}(#1)}
\newcommand{\muI}{\mu_I}
\newcommand{\muJ}{\mu_J}
\author{
Kaitlyn~Hohmeier and Erik~Slivken
}
\begin{document}
    \maketitle

\section*{Abstract}
{Permutons are probability measures on the unit square with uniform marginals that provide a natural way to describe limits of permutations. We are interested in the permuton limits for permutations sampled uniformly from certain pattern-avoiding classes that are in bijection with the class of permutations avoiding an increasing pattern of fixed length. In particular, we will look at a family of permutations whose permuton limit collapses to the unique permuton supported on the line $x + y = 1$ in the unit square, informally known as the anti-diagonal.  We prove some general properties about permutons to aid our efforts, which may be useful for proving permuton limits that converge to the anti-diagonal for a broader range of permutation classes.  
}
    
\section{Introduction}

What does a permutation sampled uniformly at random from a class of pattern-avoiding permutations look like? Some early work on this question comes from both Miner and Pak, and separately Madras and Pehlivan \cite{minerpak, madraspehlivan1} whose key result proved the probability that a point of a uniformly random 321-avoiding permutation lands far off the diagonal decays exponentially in $n$.  Hoffman, Rizzolo, and Slivken \cite{hrs1} showed that the points of these permutations converge, with some appropriate scaling, to Brownian excursion and its reflection.  Madras and Pehlivan \cite{madraspehlivan2} and later Hoffman, Rizzolo, and Slivken \cite{hrs4} extended their respective results to permutations avoiding fixed monotone patterns of size larger than $3$.  In particular, Madras and Pehlivan first showed that with high probability, there are no points far away from  the anti-diagonal for a uniformly random permutation avoiding an increasing pattern of fixed size.  

Our main goal is to expand on the results of \cite{madraspehlivan2} using the language of permutons \cite{permuton1,permuton2} in conjunction with a bijection between certain classes of pattern-avoiding permutations introduced and proven by Backelin, West, and Xin \cite{BWX}.  A permuton is a probability measure on the unit square with uniform marginals.  The space of permutons, denoted $\mathcal{M}$, forms a compact metric space in which we can describe weak limits of sequences of permutons.  Specifically, we are interested in the permuton limits for permutations sampled uniformly from certain pattern-avoiding classes that are in bijection with the class of permutations avoiding the increasing pattern of some fixed length.

\subsection{Definitions}

Before stating our main result, we first introduce some definitions and notation. Denote the set of all permutations of size $n$ by $S_n$.  Let $\pi$ be a permutation in $S_m$ and $\sigma$ a permutation in $S_n$, where $m \leq n$.  We say $\sigma $ contains a copy of $\pi$ if there exists a sequence of indices $1 \leq i_1<\cdots<i_m\leq n$ such that the relative order of $\sigma(i_1), \cdots, \sigma(i_m)$ is the same as $\pi$.  If $\sigma$ contains no copies of $\pi$, we say $\sigma$ is $\pi$-avoiding.  We denote the set of $\pi$-avoiding permutations of size $n$ by $\avn{\pi}$.  Both \cite{kitaev} and \cite{vatter} provide thorough introductions to the subject of permutation patterns.  

We let $\sigma^r$, $\sigma^c$, and $\sigma^{rc}$ denote the reverse, complement, and reverse-complement of a permutation $\sigma$, where $\sigma^r(i) = \sigma(n+1-i)$, $\sigma^c(i) = n+1-\sigma(i),$ and $\sigma^{rc}(i) = n+1-\sigma(n+1-i).$  Each of these operations can be viewed as bijections on the set of permutations.  More importantly, they play quite nicely with the subset of pattern avoiding permutations in that $\sigma\in \avn{\pi}$ if and only if $\sigma^{r}\in \avn{\pi^r}$ if and only if $\sigma^{c} \in \avn{\pi^c}$ thus inducing a bijection between any two of the sets $\avn{\pi}, \avn{\pi^r}, \avn{\pi^c},$ and $\avn{\pi^{rc}}$.      

Let $\pi$ and $\pi'$ be two permutations in $S_k$ and $S_{k'}$ respectively.  Their direct sum, denoted $\pi\oplus\pi'$ is a permutation on $S_{k+k'}$ defined point-wise as
\[
\pi\oplus\pi'(i) = \left \{ 
\begin{array}{lr}
\pi(i), & 1\leq k,\\
k+\pi'(i-k), & k < i \leq k+k'
\end{array}
\right. .
\]
Similarly, we define the skew sum of the two permutations, denoted $\pi\ominus\pi'$ and defined pointwise as
\[
\pi\ominus\pi'(i) = \left \{ 
\begin{array}{lr}
k+\pi(i), & 1\leq k,\\
\pi'(i-k), & k < i \leq k+k'
\end{array}
\right. .
\]

Let $I_k = 1,2\cdots, k$ and $J_k = k,(k-1), \cdots,1$ denote the monotone increasing and decreasing patterns, respectively.  The focus of our paper will be on permutation classes of the form $\avn{J_{k_1}\oplus I_{k_2}\oplus J_{k_3}}$ where $k_1$ and $k_3$ are positive integers and $k_2$ is a nonnegative integer.  Setting $k_1 =1$ gives the permutation class $\avn{I_{k_2+1}\oplus J_{k_3}}$ while setting $k_3 = 1$ gives $\avn{J_{k_2} \oplus I_{k_2 +1}}.$

A permuton, $\mu$, is a probability measure on the unit square with uniform marginals, that is, for $0\leq a\leq b\leq 1$,
\[
   \mu([a,b]\times[0,1]) = \mu([0,1]\times[a,b]) = b-a.
\]
We let $\mathcal{M}$ denote the space of permutons equipped with the metric \[d_{TV}(\mu,\nu) = \sup_{E\in \mathcal{B}}|\mu(E)-\nu(E)|,\] where $\mathcal{B}$ denotes Borel measurable subsets of $[0,1]^2$.  

Given a permutation $\sigma_n$ of size $n$, we define the associated permuton $\mu_{\sigma_n}$ on $E\in \mathcal{B}$ by
\[
\mu_{\sigma_{n}}(E) = n\sum_{i=1}^n\text{Leb}([(i-1)/n,i/n]\times[(\sigma_n(i)-1)/n,\sigma_n(i)/n]\cap E),
\]
where $\text{Leb}$ denotes two-dimensional Lebesgue measure.  A sequence of deterministic permutations $\{\sigma_n\}$is said to have a permuton limit, $\mu$, if the corresponding permutons $\mu_{\sigma_n}$ converge weakly to $\mu$ under the topology defined by $d_{TV}$.  If $\{\sigma_n\}$ is a sequence of random permutations we say that $\sigma_n$ converges in distribution to a random permuton $\mu$ if $\mu_{\sigma_n}$ converges in distribution to the permuton $\mu$ with respect to the topology defined above.

We will describe classes of permutations whose corresponding permuton limits converge to the unique permuton whose support is given by the line $x+y = 1$.  We denote this permuton by $\muJ$ and informally call it the anti-diagonal.  Our results readily translate to a few other classes whose permuton limit is the unique permuton, denoted $\muI$, supported on the line $x-y = 0$.


\begin{theorem}
\label{mainthm}
Fix nonnegative integers $k_1,k_2,k_3$, with $k_1$ and $k_3$ both strictly positive.  For $\pi_n$ chosen uniformly at random from $\avn{J_{k_1}\oplus I_{k_2}\oplus J_{k_3}}$, the corresponding permuton $\mu_{\pi_n}$ satisfies \[\lim_{n\to \infty} \mu_{\pi_n}  =_d  \muJ\]
where the convergence is in distribution with respect to the topology of weak convergence.  
\end{theorem}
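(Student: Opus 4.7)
The plan is to combine the Backelin--West--Xin (BWX) bijection with concentration estimates in the spirit of \cite{madraspehlivan2}. Let $\tau=J_{k_1}\oplus I_{k_2}\oplus J_{k_3}$ and $K=k_1+k_2+k_3$. Iterated application of the BWX theorem, together with the reverse/complement symmetries noted earlier, yields $|\avn{\tau}|=|\avn{I_K}|$, so in particular this count is known to grow like $(K-1)^{2n}$ up to polynomial factors.

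Before attempting the main estimate I would prove a general permuton lemma, of the kind the abstract promises: for random permutations $\pi_n$ of size $n$, $\mu_{\pi_n}\to\muJ$ in distribution if and only if for every $\epsilon>0$ the fraction of indices $i$ with $|\pi_n(i)+i-n-1|>\epsilon n$ converges to $0$ in probability. This follows from $\muJ$ being the unique permuton assigning zero mass to every open rectangle disjoint from $\{x+y=1\}$, combined with the uniform-marginal constraint that forces the remaining mass into a thin strip around the anti-diagonal.

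With this reformulation, the theorem reduces to a quantitative concentration statement about the displacements $\pi_n(i)+i-n-1$, which I would attack by a direct counting argument. The BWX identity provides a lower bound $|\avn{\tau}|\geq c(K-1)^{2n}n^{-d}$, and the task becomes showing that the number of $\pi\in\avn{\tau}$ with at least $\delta n$ indices displaced by more than $\epsilon n$ from the anti-diagonal is $o((K-1)^{2n}n^{-d})$. The natural strategy is to decompose such a $\pi$ into an off-diagonal sub-permutation carrying the displaced points and an on-diagonal residue; $\tau$-avoidance forces each piece to avoid a derived pattern, and summing over choices of positions, values, and the residue should yield the needed upper bound.

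The main obstacle is this last counting bound. The BWX bijection is purely combinatorial and does not obviously preserve geometric structure, so one cannot simply transport Madras--Pehlivan's concentration from $\avn{I_K}$ to $\avn{\tau}$. Instead, one must exploit the specific shape of $\tau$: points far below the anti-diagonal contribute a $J_{k_1}$ block, points far above contribute a $J_{k_3}$ block, and any increasing sub-sequence of length $k_2$ strictly between them completes a copy of $\tau$. Making this geometric picture quantitative, with a sub-count tight enough to beat the growth rate $(K-1)^{2n}$, is where the bulk of the technical work will lie.
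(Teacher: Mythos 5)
Your reduction of the theorem to a displacement-concentration statement is sound and parallels what the paper does (its Lemma \ref{weakconv} plays exactly this role, phrased via the sets $W_\epsilon$), but the heart of your proposal --- the counting bound showing that permutations in $\avn{J_{k_1}\oplus I_{k_2}\oplus J_{k_3}}$ with $\delta n$ displaced points number $o\bigl(|\avn{I_K}|\bigr)$ --- is left as a sketch, and the sketch does not work as stated. A point far above or below the anti-diagonal does not by itself ``contribute a $J_{k_1}$ block'' or a ``$J_{k_3}$ block'': a copy of $\tau$ requires a decreasing $k_1$-sequence followed, in both position and value, by an increasing $k_2$-sequence and then another decreasing $k_3$-sequence, and there is no evident reason that a positive density of displaced points forces such a configuration. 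The Madras--Pehlivan argument for $\avn{I_K}$ leans on the decomposition of such permutations into $K-1$ decreasing subsequences and on precise asymptotics for the counts; no analogous structural decomposition of $\avn{\tau}$ is available a priori, which is precisely why a direct large-deviations count for these classes is hard. Without that bound the proof does not close.

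You also explicitly reject the route the paper actually takes, asserting that the Backelin--West--Xin bijection ``does not obviously preserve geometric structure.'' In fact the opposite is the paper's main observation: the bijection fixes every point in a \emph{frozen region}, and for permutations satisfying the high-probability ``good'' properties inherited from $\avn{I_{d+1}}$ (Proposition \ref{goodprops} and Lemma \ref{sequence}) this frozen region contains the entire upper triangle $W^+_{\delta,n}$. Hence the image permutation still has \emph{no} mass far above the anti-diagonal. The remaining difficulty --- that the bijection might push points far below the anti-diagonal --- is handled by the one-sided Lemma \ref{oneside}, which shows that the uniform marginals alone force $\mu(W^-_\epsilon)$ to be small once $\mu(W^+_\delta)$ is. Iterating this through the chain $\sigma\to\rho\to\rho^{rc}\to\pi^{rc}\to\pi$ transports the known concentration of $\avn{I_{d+1}}$ to the target class with no new counting required. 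If you wish to salvage your direct approach, you would need to supply the missing large-deviation estimate from scratch; otherwise the frozen-region analysis is the step your argument is missing.
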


This theorem provides partial progress to the conjecture found on page 282 of \cite{diagconj} which states that any permutation class of the form $\avn{\alpha \oplus \beta}$ will have a permuton limit of $\muJ$.  Our proof relies strongly on a bijection with $\avn{I_{k_1+k_2+k_3}}$.  We note that classes of the form $\avn{I_{k_1} \oplus J_{k_2} \oplus I_{k_3}}$ are not necessarily in bijection with $\avn{I_{k_1+k_2+k_3}}$ and are much harder to analyze.  In particular, this collection of classes contains $\avn{1324}$ which is notoriously difficult to study.

We first define some more useful notation and then give an outline of the proof.    

We often wish to translate between continuous domains and discrete domains.  For $0\leq a <c\leq n$ and $0\leq b < d \leq n $, let \[R((a,b), (c,d))= \{(x,y)\in \mathbb{R}^2: a \leq x < c, b \leq y < d\}\] denote the axes-aligned continuous rectangle with corners at $(a,b)$ and $(c,d)$, with $R(c,d) = R((0,0),(c,d))$ for short.  We note that the point $(c,d)$ is not contained in $R(c,d)$.  The $i^{th}$ column of $R(n,n)$ is the rectangle $R((i-1,0),(i,n))$ and the $j^{th}$ row of $R(n,n)$ is the rectangle $R((0,j-1),(n,j)$.  The $(i,j)^{th}$ box in $R(n,n)$ is the intersection of the $i^{th}$ column and the $j^{th}$ row.  For a subset of points with nonnegative coordinates, $A= (i_k,j_k)_{k=1}^m$, we let \[
\souw(A) = \bigcup_{k=1}^m R(i_k,j_k). 
\] 
For a permutation $\sigma \in S_n$ and a subset of indices $C\subset [n]$ define
\[
\souw(C,\sigma) = \bigcup_{i \in C} R(i,\sigma(i)).
\]
If $i_k$ is increasing, and $j_k$ is decreasing, the set $\souw(A)$ will be called a Young diagram.  
We often view a permutation $\sigma$ of size $n$ as a subset of $R(n,n)$ by filling the boxes $R((i-1,\sigma(i)-1),(i,\sigma(i))$ within the rectangle $R(n,n)$.  
For $\epsilon> 0$ and $n\in \mathbb{N}$ we define three subregions of $R(n,n)$: \[
W^+_{\epsilon,n} := \{ (x,y) \in R(n,n): x+ y > (1+ \epsilon) n \},
\]
\[
W^-_{\epsilon,n} := \{ (x,y) \in R(n,n): x+ y < (1 - \epsilon) n \}.
\]
and
\[
W_{\epsilon,n} := W^+_{\epsilon,n} \bigcup W^-_{\epsilon,n}
\]
Given a subregion $X_n\subset R(n,n)$, we let $\frac{1}{n} X_n = \{(x/n,y/n): (x,y)\in X_n \}$ denote the region of $[0,1]^2$ that is similar in shape, relative location, and orientation to $X_n$.
In particular, we let $W^+_\epsilon = \frac{1}{n}W^+_{\epsilon,n}$ and define $W^-_\epsilon$ and $W_\epsilon$ analogously.

\subsection*{Sketch of Proof}
The proof of our main theorem proceeds as follows.  In Section \ref{biject} we discuss useful properties of a type of bijection proven in Theorem 2.1 in \cite{BWX} which establishes that for any pattern $\tau$, $\avn{I_k \oplus \tau}$ is in bijection with $\avn{J_k \oplus \tau}$.  Starting from $\sigma \in \avn{I_{k_1} \oplus I_{k_2} \oplus I_{k_3}}$, using either a Backelin-West-Xin bijection or the reverse complement, we get a sequence of permutations 
\[
\sigma \to \rho \to \rho^{rc} \to \pi^{rc} \to \pi,
\]
where 
\[
\rho \in \avn{J_{k_1} \oplus I_{k_2} \oplus I_{k_3}}, \qquad \rho^{rc} \in \avn{ I_{k_3} \oplus I_{k_2} \oplus J_{k_1}},
\]
\[
\pi^{rc} \in \avn{J_{k_3} \oplus I_{k_2} \oplus J_{k_1}}, \qquad \pi \in \avn{J_{k_1} \oplus I_{k_2} \oplus J_{k_3}}.
\]

For the bijection sending $\sigma$ to $\rho$ we say there is a \emph{frozen region} in $R(n,n)$ where $\rho(i) =j$ if and only if $\sigma(i) = j$ for boxes $(i,j)$ in this region.  If $X$ is contained in this frozen region then $\mu_{\sigma}\left(\frac{1}{n} X\right) = \mu_{\rho}\left(\frac{1}{n}X\right)$.  The bijection from $\rho^{rc}$ to $\pi^{rc}$ also has a similar frozen region.  

In Section \ref{onesie} we prove some general lemmas about permutons.  Lemma \ref{oneside} says that if $\mu(W^+_\delta) < \delta$ then for some appropriate $\epsilon$, $\mu(W_\epsilon) < \epsilon$.  Lemma \ref{weakconv} show that shows if $\mu_n$ is a sequence of permutons such that for any $\epsilon> 0$, $\mu_n(W_\epsilon) < \epsilon$ for $n$ large enough, the $\mu_n$ converges weakly to $\muJ.$  

In Section \ref{structure} we explore the results in \cite{hrs4}, where it is proven, among other results, that the points of $\sigma$ can be partitioned into a collection of $d = k_1+k_2 + k_3 -1$ decreasing sequences which are not too far from the line $x+y = n+1$ and which satisfy certain use properties with high probability.  We make explicit these properties by defining a set $\avne{I_d}{\epsilon}$ and show that $\sigma$ chosen uniformly from $\avn{I_d}$ lies in $\avne{I_d}{\epsilon}$ with probability tending to 1.  Lemmas \ref{sequence} combines with \ref{epi} to show that if $\epsilon_1 <\epsilon_2 < \epsilon$ satisfy certain conditions and $\sigma \in \avne{I_d}{\epsilon_1}$, the image $\pi\in \avn{J_{k_1}\oplus I_{k_2} \oplus J_{k_3}}$ with have a permuton that satisfies $\mu_{\pi}(W_\epsilon) < \epsilon)$.  

This is seen by first showing the frozen region under the bijection from $\sigma$ to $\rho$ contains the region $W^+_{n,2\epsilon_1}$ and the corresponding permutons satisfy $\mu_\sigma(W^+_{2\epsilon_1}) = \mu_{\rho}(W^+_{2\epsilon_1}) =0$ and thus $\mu_\rho(W_{\epsilon_2}) < \epsilon_2$.  Then through a slightly more complicated argument we show that the frozen region under the bijection from $\rho^{rc}$ to $\pi^{rc}$ contains the region $W^+_{n,2\epsilon_2}$ with $\mu_{\rho^{rc}}(W^+_{2\epsilon_2}) = \mu_{\pi^{rc}}(W^+_{2\epsilon_2}) < 2\epsilon_2.$  Another application of Lemma \ref{oneside} arrives at the above conclusion.  

One difficultly of the proof is finding a way to show that the frozen region under the bijection that maps $\rho^{rc}$ to $\pi^{rc}$ is sufficiently large enough to contain a region of the form $W^{+}_{n,2\epsilon_2}$.  If the frozen region under this bijection were too small, our proof would fall apart.  However, using results developed in \cite{hrs4}, Lemmas \ref{goodprops} and \ref{sequence} provides a set of conditions that hold with high probability.  The proof of Lemma \ref{epi} uses those properties to show that frozen region will, in fact, be large enough.

We can then conclude that for a sequence of permutations $\{\pi_n\}$ with $\pi_n \in \avn{J_{k_1} \oplus I_{k_2} \oplus J_{k_3}}$ chosen uniformly at random, the permuton $\mu_{\pi_n}$ will have $\mathbb{P}_n(\mu_{\pi_n}(W_\epsilon) < \epsilon) \to 1$, and thus $\mu_{\pi_n} \to_d \muJ$ where the convergence is in distribution with respect to topology of weak convergence.

\section{Backelin-West-Xin Bijection}\label{biject}
In this section, we discuss the bijection from \cite{BWX}, with some adjustments to their notation as shown in \cite{BaxterJaggard}. First, we define the notions of Wilf-equivalence and shape-Wilf equivalence. Let $S_n(\sigma)$ denote the set of permutations of size $n$ that do not contain an occurrence of a permutation $\sigma$. We say that two permutations $\sigma$ and $\sigma'$ are \textit{Wilf-equivalent}, denoted $\sigma \sim_W \sigma'$, if $|S_n(\sigma)| = |S_n(\sigma')|$ for all $n$. Hence, under this definition all length 3-avoiding permutations are Wilf equivalent, since for all $\tau \in S_3$, we have $|S_n(\tau)| = C_n$, the $n$th Catalan number.

\begin{figure}[htbp]
     \centering
    \begin{tikzpicture}[scale=0.35]
       \draw [lightgray] (0,0) grid (5,6);
       \draw [lightgray] (0,5) grid (4,7);
       \draw [lightgray] (5,0) grid (7,3);


       \fill[fill=blue, fill opacity=0.4] (3,5) rectangle (4,7);
       \fill[fill=blue, fill opacity=0.4] (4,5) rectangle (5,6);
      
       \filldraw[black] (.5,0.5) circle (5pt);
       \filldraw[black] (1.5,3.5) circle (5pt);
       \filldraw[black] (2.5,4.5) circle (5pt);
       \filldraw[black] (3.5,6.5) circle (5pt);
       \filldraw[black] (4.5,5.5) circle (5pt);
       \filldraw[black] (5.5,1.5) circle (5pt);
       \filldraw[black] (6.5,2.5) circle (5pt);
       
     \end{tikzpicture}    
     \hspace{2cm} 
     \begin{tikzpicture}[scale=0.35]
       \draw [lightgray] (0,0) grid (5,6);
       \draw [lightgray] (0,5) grid (4,7);
       \draw [lightgray] (5,0) grid (7,3);


       \fill[fill=blue, fill opacity=0.4] (0,4) rectangle (2,7);
       \fill[fill=brown, fill opacity=0.4] (1,3) rectangle (3,5);
      
       \filldraw[black] (.5,6.5) circle (5pt);
       \filldraw[black] (1.5,4.5) circle (5pt);
       \filldraw[black] (2.5,3.5) circle (5pt);
       \filldraw[black] (3.5,5.5) circle (5pt);
       \filldraw[black] (4.5,0.5) circle (5pt);
       \filldraw[black] (5.5,1.5) circle (5pt);
       \filldraw[black] (6.5,2.5) circle (5pt);
       
     \end{tikzpicture}    
     \caption{The traversal on the left avoids the pattern $21$ as any two points in decreasing order (like those highlighted) is not contained in a rectangle within the bounding Young diagram.  The traversal on the right contains copies of the pattern $21$ (some of which are highlighted).}
     \label{fi:sweex}
\end{figure}

For shape-Wilf equivalence, consider a Young diagram $\lambda$, where $\lambda = (\lambda_1, \lambda_2, \dots, \lambda_n)$.  We use the French convention for Young diagrams so the $i$th row starting from the bottom row has length $\lambda_i$.  We define $L$ a \textit{traversal} of $\lambda$ as a filling of boxes of $\lambda$ with zeros and ones, with exactly one 1 in each row and column; for our purposes, we will illustrate ones with dots, corresponding to entries of a permutation, and zeros with empty boxes. Given $\sigma \in S_m$, let $M_\sigma$ denote the traversal given by $\sigma$ of the $m \times m$ Young diagram. We say that $L$ \textit{contains} a copy of $\sigma$ if $M_\sigma$ (including all its $1$s and $0$s) is present in $L$ when restricting to rows $r_1 < r_2 < \dots < r_m$ and columns $c_1 < c_2 < \dots < c_m$. Hence, we view the pattern as being present only if it is contained in a complete rectangle contained within the Young diagram (See Figure \ref{fi:sweex}). On the other hand, we say that $L$ \textit{avoids} $\sigma$ if $L$ contains no copy of $\sigma$. Let $S_\lambda (\sigma)$ denote the traversals of $\lambda$ that avoid $\sigma$. Two permutations $\sigma$ and $\sigma'$ are \textit{shape-Wilf equivalent}, denoted $\sigma \sim_{sW} \sigma'$, if $|S_\lambda (\sigma)| = |S_\lambda (\sigma')|$ for all Young diagrams $\lambda$. For example, the shape-Wilf equivalence classes for length 3 permutations are $\{213, 132 \}$, $\{123, 231, 321\}$, and $\{312\}$.


A result from \cite{BWX} relates to the shape-Wilf equivalence of larger permutations that contain shape-Wilf equivalent patterns. We adapt to the notation shown in \cite{BaxterJaggard}.

\begin{proposition}[\cite{BWX}]{\label{genbwx}}
    If $\sigma \sim_{sW} \sigma'$, then $\sigma \oplus \tau \sim_{sW} \sigma' \oplus \tau$.
\end{proposition}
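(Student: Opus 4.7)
The plan is to fix an arbitrary Young diagram $\lambda$ and bijectively match traversals in $S_\lambda(\sigma \oplus \tau)$ with those in $S_\lambda(\sigma' \oplus \tau)$. The idea is to identify a ``$\tau$-substructure'' inside each traversal so that avoidance of $\sigma\oplus\tau$ reduces to $\sigma$-avoidance on a Young sub-diagram, at which point the hypothesis $\sigma \sim_{sW} \sigma'$ can be applied on that sub-diagram.

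First, for each traversal $L$ of $\lambda$, I would associate canonically a pair $(\lambda_L, F_L)$, where $\lambda_L$ is a Young sub-diagram sitting in the lower-left of $\lambda$ and $F_L$ is the restriction of $L$ to the complement $\lambda \setminus \lambda_L$. The sub-diagram $\lambda_L$ is obtained by greedily extracting a $\tau$-pattern from the upper-right of $L$: scan inward from the upper-right corner of $\lambda$, reserving rows and columns for a copy of $\tau$ as late as possible (the highest available rows and the rightmost available columns that can support an occurrence of $\tau$ inside a rectangle of $\lambda$). If $L$ avoids $\tau$ outright, set $\lambda_L = \lambda$ and $F_L = \emptyset$; otherwise, the reserved $\ell = |\tau|$ rows and $\ell$ columns are cut out and $\lambda_L$ is what remains.

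Next, I would verify two properties of this decomposition. (i) $L$ contains $\sigma \oplus \tau$ if and only if the restriction $L|_{\lambda_L}$ contains $\sigma$ as a traversal-pattern in $\lambda_L$. The ``if'' direction combines a copy of $\sigma$ in $\lambda_L$ with the canonical $\tau$-copy recorded in $F_L$; the rectangle condition for the combined $\sigma\oplus\tau$-occurrence reduces to having its upper-right corner in $\lambda$, which is inherited from the $\tau$-copy. The ``only if'' direction is where the maximality of the greedy choice in Step 1 is used: any $\sigma\oplus\tau$-occurrence in $L$ can be modified, without loss, to use the canonical $\tau$-copy, forcing its $\sigma$-part into $\lambda_L$. (ii) The pair $(\lambda_L, F_L)$ depends only on $F_L$, since the greedy scan only inspects the upper-right cells; in particular, the block of $S_\lambda$ indexed by a given admissible $(\lambda_L, F_L)$ is parametrized by the free choice of the restriction $L|_{\lambda_L}$, ranging over all traversals of $\lambda_L$.

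Finally, I would partition $S_\lambda$ by the pair $(\lambda_L, F_L)$. Within each block, property (i) identifies the $(\sigma \oplus \tau)$-avoiders with $S_{\lambda_L}(\sigma)$ and the $(\sigma' \oplus \tau)$-avoiders with $S_{\lambda_L}(\sigma')$. The hypothesis $\sigma \sim_{sW} \sigma'$ gives $|S_{\lambda_L}(\sigma)| = |S_{\lambda_L}(\sigma')|$ for each $\lambda_L$ appearing, and summing over blocks yields $|S_\lambda(\sigma \oplus \tau)| = |S_\lambda(\sigma' \oplus \tau)|$. Since $\lambda$ was arbitrary, this is exactly $\sigma \oplus \tau \sim_{sW} \sigma' \oplus \tau$. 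The main obstacle is the canonical decomposition in Step 1: the greedy $\tau$-extraction must be defined so that both (i) and (ii) hold simultaneously, and in particular the ``only if'' half of (i) — showing that a maximally-up-right $\tau$-copy suffices for every $(\sigma\oplus\tau)$-occurrence — is the technical heart of the argument in \cite{BWX}.
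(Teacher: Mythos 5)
The paper does not actually prove this proposition; it cites \cite{BWX} and only records the construction that the proof yields, namely: color a cell $(i,j)$ blue when \emph{some} occurrence of $\tau$ lies strictly to its northeast, observe that the blue cells form a southwest-closed region (hence a Young diagram $\lambda(\pi,\tau)$ after deleting the rows and columns of points in white cells), and apply the $S_\lambda(\sigma)\to S_\lambda(\sigma')$ bijection to the blue traversal. Your overall strategy --- reduce $(\sigma\oplus\tau)$-avoidance to $\sigma$-avoidance on a canonically determined sub-diagram and invoke $\sigma\sim_{sW}\sigma'$ there --- is the right one, but your specific decomposition is not the one used, and it has genuine gaps. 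With the northeast-coloring definition both directions of your property (i) are essentially immediate: the $\sigma$-part of any $(\sigma\oplus\tau)$-occurrence sits in a rectangle whose northeast corner has a $\tau$-copy beyond it, so that whole rectangle is blue; conversely a $\sigma$-copy in a rectangle of the blue region is completed by the $\tau$-copy witnessing the blueness of its corner.

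Your single greedy $\tau$-copy does not achieve this. First, the ``only if'' half of (i) fails: a $(\sigma\oplus\tau)$-occurrence may use a $\tau$-copy different from the canonical one, and its $\sigma$-part may occupy points lying in the $\ell$ rows or columns you reserved (a traversal has one point per row, so a reserved row's unique point is exactly a canonical $\tau$-point, which the $\sigma$-part of another occurrence is free to reuse); after cutting those rows and columns out there is no reason a $\sigma$-copy survives in $\lambda_L$, and ``switching to the canonical $\tau$-copy'' can destroy the rectangle condition. Second, property (ii) fails as stated: $\lambda$ minus $\ell$ rows and $\ell$ columns is not a lower-left sub-diagram --- its cells reach into the upper-right of $\lambda$ --- so varying $L|_{\lambda_L}$ can create or destroy $\tau$-occurrences and move the greedy choice, and the fibers of $L\mapsto(\lambda_L,F_L)$ are not parametrized by all traversals of $\lambda_L$. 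The northeast-coloring repairs both points at once. The remaining technical content of \cite{BWX}, which neither you nor the paper carries out, is the analogue of your (ii): showing the coloring, and hence the frozen region, is unchanged when $\pi_\lambda$ is replaced by its image in $S_\lambda(\sigma')$, so that the induced map is genuinely a bijection.
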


The proof of this proposition implies a bijection between $\avn{\sigma\oplus\tau}$ and $\avn{\sigma'\oplus\tau}$ that fixes a large set of points for many of the permutations. This process is illustrated in Figure \ref{fi:bwx} with a length 20 permutation.  Fix $\pi \in \avn{\sigma\oplus \tau}$; we color the box $(i,j)\in [n]^2$ white if there is no occurrence of $\tau$ using points $(i',\pi(i'))$ where $i'>i$ and $\pi(i')>j$. Otherwise we color the box blue.
%
Let $\lambda = \lambda(\pi,\tau)$ denote the Young diagram obtained by deleting all white boxes and rows and columns that contain a point $(i,\pi(i))$ in a white box. 
The points $(i,\pi(i))$ that are colored blue form a traversal of $\lambda$, which we denote by $\pi_\lambda$. Now, $\pi_\lambda \in S_\lambda(\sigma)$. Since $\sigma' \sim_{sW} \sigma$, there exists a bijection from $S_\lambda(\sigma)$ to $S_\lambda(\sigma')$. Let $\pi'_\lambda$ denote the image under this bijection of $\pi_\lambda$. This induces a bijection between $\avn{\sigma \oplus \tau}$ and $\avn{\sigma' \oplus \tau}$.  As only the points in blue boxes are changed under this bijection, we call the white boxes the \emph{frozen region} under this bijection.

\begin{figure}[thp]
\centering
    \begin{tikzpicture}[scale=0.2]
       \draw [lightgray] (0,0) grid (20,20);
    
       \filldraw[black] (.5,13.5) circle (5pt);
       \filldraw[black] (1.5,9.5) circle (5pt);
       \filldraw[black] (2.5,16.5) circle (5pt);
       \filldraw[black] (3.5,7.5) circle (5pt);
       \filldraw[black] (4.5,19.5) circle (5pt);
       \filldraw[black] (5.5,5.5) circle (5pt);
       \filldraw[black] (6.5,14.5) circle (5pt);
       \filldraw[black] (7.5,2.5) circle (5pt);
       \filldraw[black] (8.5,12.5) circle (5pt);
       \filldraw[black] (9.5,18.5) circle (5pt);
       \filldraw[black] (10.5,10.5) circle (5pt);
       \filldraw[black] (11.5,8.5) circle (5pt);
       \filldraw[black] (12.5,1.5) circle (5pt);
       \filldraw[black] (13.5,0.5) circle (5pt);
       \filldraw[black] (14.5,17.5) circle (5pt);
       \filldraw[black] (15.5,15.5) circle (5pt);
       \filldraw[black] (16.5,3.5) circle (5pt);
       \filldraw[black] (17.5,11.5) circle (5pt);
       \filldraw[black] (18.5,6.5) circle (5pt);
       \filldraw[black] (19.5,4.5) circle (5pt);
    
        \fill[fill=blue, fill opacity=0.4] (2,12) rectangle (0,16);
        \fill[fill=blue, fill opacity=0.4] (5,12) rectangle (2,14);
        \fill[fill=blue, fill opacity=0.4] (8,12) rectangle (0,0);
        \fill[fill=blue, fill opacity=0.4] (8,10) rectangle (10,0);
        \fill[fill=blue, fill opacity=0.4] (11,8) rectangle (10,0);
        \fill[fill=blue, fill opacity=0.4] (5,12) rectangle (6,14);
    
        \fill[fill=blue, fill opacity=0.4] (11,3) rectangle (16,0);
       
     \end{tikzpicture}    
     \qquad 
    \begin{tikzpicture}[scale=0.2]
         \draw [lightgray] (0,0) grid (20,20);
    
       \filldraw[black] (.5,13.5) circle (5pt);
       \filldraw[black] (1.5,9.5) circle (5pt);
       \filldraw[black] (2.5,16.5) circle (5pt);
       \filldraw[black] (3.5,7.5) circle (5pt);
       \filldraw[black] (4.5,19.5) circle (5pt);
       \filldraw[black] (5.5,5.5) circle (5pt);
       \filldraw[black] (6.5,14.5) circle (5pt);
       \filldraw[black] (7.5,2.5) circle (5pt);
       \filldraw[black] (8.5,12.5) circle (5pt);
       \filldraw[black] (9.5,18.5) circle (5pt);
       \filldraw[black] (10.5,10.5) circle (5pt);
       \filldraw[black] (11.5,8.5) circle (5pt);
       \filldraw[black] (12.5,1.5) circle (5pt);
       \filldraw[black] (13.5,0.5) circle (5pt);
       \filldraw[black] (14.5,17.5) circle (5pt);
       \filldraw[black] (15.5,15.5) circle (5pt);
       \filldraw[black] (16.5,3.5) circle (5pt);
       \filldraw[black] (17.5,11.5) circle (5pt);
       \filldraw[black] (18.5,6.5) circle (5pt);
       \filldraw[black] (19.5,4.5) circle (5pt);
    
        \fill[fill=blue, fill opacity=0.4] (2,12) rectangle (0,16);
        \fill[fill=blue, fill opacity=0.4] (5,12) rectangle (2,14);
        \fill[fill=blue, fill opacity=0.4] (8,12) rectangle (0,0);
        \fill[fill=blue, fill opacity=0.4] (8,10) rectangle (10,0);
        \fill[fill=blue, fill opacity=0.4] (11,8) rectangle (10,0);
        \fill[fill=blue, fill opacity=0.4] (5,12) rectangle (6,14);
        \fill[fill=blue, fill opacity=0.4] (11,3) rectangle (16,0);
    
        \begin{scope}[transparency group, fill opacity=0.4]
        \fill[fill=brown] (3,0) rectangle (2,14);
        \fill[fill=brown] (5,0) rectangle (4,14);
        \fill[fill=brown] (7,0) rectangle (6,12);
        \fill[fill=brown] (9,0) rectangle (8,10);
        \fill[fill=brown] (10,0) rectangle (9,10);
        \fill[fill=brown] (11,0) rectangle (10,8);
        \fill[fill=brown] (12,0) rectangle (11,3);
        \fill[fill=brown] (16,0) rectangle (14,3);
    
        \fill[fill=brown] (0,14) rectangle (2,16);
        \fill[fill=brown] (0,12) rectangle (6,13);
        \fill[fill=brown] (0,10) rectangle (8,12);
        \fill[fill=brown] (0,8) rectangle (9,9);
        \fill[fill=brown] (0,6) rectangle (10,7);
        \fill[fill=brown] (0,3) rectangle (11,5);
        \end{scope}
        
     \end{tikzpicture}    
\qquad
    \begin{tikzpicture}[scale=0.2]
            \draw [lightgray] (0,0) grid (20,20);
        
           \filldraw[black] (.5,0.5) circle (5pt);
           \filldraw[black] (1.5,5.5) circle (5pt);
           \filldraw[black] (2.5,16.5) circle (5pt);
           \filldraw[black] (3.5,7.5) circle (5pt);
           \filldraw[black] (4.5,19.5) circle (5pt);
           \filldraw[black] (5.5,13.5) circle (5pt);
           \filldraw[black] (6.5,14.5) circle (5pt);
           \filldraw[black] (7.5,9.5) circle (5pt);
           \filldraw[black] (8.5,12.5) circle (5pt);
           \filldraw[black] (9.5,18.5) circle (5pt);
           \filldraw[black] (10.5,10.5) circle (5pt);
           \filldraw[black] (11.5,8.5) circle (5pt);
           \filldraw[black] (12.5,1.5) circle (5pt);
           \filldraw[black] (13.5,2.5) circle (5pt);
           \filldraw[black] (14.5,17.5) circle (5pt);
           \filldraw[black] (15.5,15.5) circle (5pt);
           \filldraw[black] (16.5,3.5) circle (5pt);
           \filldraw[black] (17.5,11.5) circle (5pt);
           \filldraw[black] (18.5,6.5) circle (5pt);
           \filldraw[black] (19.5,4.5) circle (5pt);
        
            \fill[fill=blue, fill opacity=0.4] (2,12) rectangle (0,16);
            \fill[fill=blue, fill opacity=0.4] (5,12) rectangle (2,14);
            \fill[fill=blue, fill opacity=0.4] (8,12) rectangle (0,0);
            \fill[fill=blue, fill opacity=0.4] (8,10) rectangle (10,0);
            \fill[fill=blue, fill opacity=0.4] (11,8) rectangle (10,0);
            \fill[fill=blue, fill opacity=0.4] (5,12) rectangle (6,14);
            \fill[fill=blue, fill opacity=0.4] (11,3) rectangle (16,0);
        
        \begin{scope}[transparency group, fill opacity=0.4]
        \fill[fill=brown] (3,0) rectangle (2,14);
        \fill[fill=brown] (5,0) rectangle (4,14);
        \fill[fill=brown] (7,0) rectangle (6,12);
        \fill[fill=brown] (9,0) rectangle (8,10);
        \fill[fill=brown] (10,0) rectangle (9,10);
        \fill[fill=brown] (11,0) rectangle (10,8);
        \fill[fill=brown] (12,0) rectangle (11,3);
        \fill[fill=brown] (16,0) rectangle (14,3);
    
        \fill[fill=brown] (0,14) rectangle (2,16);
        \fill[fill=brown] (0,12) rectangle (6,13);
        \fill[fill=brown] (0,10) rectangle (8,12);
        \fill[fill=brown] (0,8) rectangle (9,9);
        \fill[fill=brown] (0,6) rectangle (10,7);
        \fill[fill=brown] (0,3) rectangle (11,5);
        \end{scope}
        \end{tikzpicture}
\caption{Illustration of coloring procedure in proof of Proposition \ref{genbwx} and the Backelin-West-Xin bijection with $\sigma = I_2,$ $\sigma' = J_2$ and $\tau = I_2$.  The white region of the square is called the \emph{frozen region}.}
\label{fi:bwx}
\end{figure}

Applying the Backelin-West-Xin bijection, for the example shown in Figure \ref{fi:bwx}, the traversal $\pi'_\lambda$ avoids the pattern $21$ (by the previous discussion of pattern containment for Young diagrams).

Using the shape-Wilf-equivalence of $J_k$ and $J_{k-m}\oplus I_m$ for any $0\leq m<k$ that was proven in \cite{BWX}, we obtain a particularly useful bijection, summarized in Proposition \ref{bwx}. 

\begin{proposition}[Backelin-West-Xin Bijection]\label{bwx}
   For any fixed pattern $\sigma$, with $\sigma \in S_{m}$, there exists a bijection between the sets of pattern-avoiding permutations $\avn {I_{k} \oplus \sigma}$ and $\avn{J_k \oplus \sigma}$. 
\end{proposition}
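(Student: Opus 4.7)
The plan is to deduce Proposition \ref{bwx} by combining two ingredients already in place: the shape-Wilf equivalence $I_k \sim_{sW} J_k$ established in \cite{BWX}, and the preceding proposition stating that shape-Wilf equivalence is preserved under direct sum on the left. Applying the latter with $(\sigma,\sigma',\tau) = (I_k,\, J_k,\, \sigma)$ immediately yields $I_k \oplus \sigma \sim_{sW} J_k \oplus \sigma$, and specializing the Young diagram $\lambda$ to the $n \times n$ square produces the cardinality equality $|\avn{I_k \oplus \sigma}| = |\avn{J_k \oplus \sigma}|$.

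To upgrade this to an explicit bijection, I would follow the coloring construction already described above. Given $\pi \in \avn{I_k \oplus \sigma}$, color the box $(i,j)$ blue if $\sigma$ occurs among the points of $\pi$ in the strict upper-right quadrant of $(i,j)$, and white otherwise. After deleting the rows and columns hosting white points of $\pi$, the blue boxes form a Young diagram $\lambda = \lambda(\pi,\sigma)$, and the blue points of $\pi$ form a traversal $\pi_\lambda$ of $\lambda$. Since $\pi$ avoids $I_k \oplus \sigma$, no $I_k$ in $\pi_\lambda$ can be extended to a $\sigma$ in the upper right, so $\pi_\lambda \in S_\lambda(I_k)$. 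Applying the shape-Wilf bijection $S_\lambda(I_k) \to S_\lambda(J_k)$ guaranteed by $I_k \sim_{sW} J_k$ yields $\pi'_\lambda \in S_\lambda(J_k)$, and reinserting this traversal in place of the blue points of $\pi$ while leaving all white (frozen) points fixed defines the image $\pi'$.

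The main thing to verify, and where I expect the bulk of the work to lie, is that the construction is well-defined and symmetric. Concretely, I need: (i) that the same Young diagram $\lambda$ is attached to $\pi$ and to $\pi'$, so that the procedure can be inverted by running the shape-Wilf bijection in the opposite direction; this reduces to the observation that blueness of a box is determined solely by $\sigma$-occurrences lying strictly to its upper-right, and every point of such a $\sigma$-occurrence is itself blue (hence the frozen white boxes, and therefore $\lambda$, are unaffected by the internal rearrangement); and (ii) that $\pi' \in \avn{J_k \oplus \sigma}$, which I would prove by contradiction: a hypothetical copy of $J_k \oplus \sigma$ in $\pi'$ produces a $\sigma$-copy in the upper-right of its $J_k$-part, forcing those $J_k$-points to be blue and hence to lie inside $\pi'_\lambda$, contradicting $\pi'_\lambda \in S_\lambda(J_k)$. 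Once (i) and (ii) are established, running the identical construction starting from an element of $\avn{J_k \oplus \sigma}$ supplies the inverse map, completing the bijection asserted in Proposition \ref{bwx}.
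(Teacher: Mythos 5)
Your overall route is exactly the paper's: the paper obtains Proposition \ref{bwx} by combining the shape-Wilf equivalence $J_k \sim_{sW} J_{k-m}\oplus I_m$ (so in particular $J_k \sim_{sW} J_1 \oplus I_{k-1} = I_k$) with the preceding proposition that $\sigma \sim_{sW} \sigma'$ implies $\sigma\oplus\tau \sim_{sW} \sigma'\oplus\tau$, and it treats the correctness of the coloring construction as established in \cite{BWX} rather than reproving it. So your first two paragraphs match the intended argument.

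However, the justification you offer for your step (i) rests on a false claim. You assert that ``every point of such a $\sigma$-occurrence is itself blue.'' It is not: a point $(a,\pi(a))$ is blue only if some $\sigma$-occurrence lies \emph{strictly} northeast of it, and the points of an occurrence witnessing the blueness of a box $(i,j)$ need not have any further occurrence beyond themselves. (Already for $\sigma = 1$ and $\pi = 12$, the point $(2,2)$ witnesses the blueness of box $(1,1)$ but is itself white.) The correct statement is essentially the reverse, and it is what \cite{BWX} actually uses: whenever a box is blue, one can iteratively push the witnessing occurrence further northeast (each replacement strictly increases its minimal column index, so the process terminates) until it consists entirely of \emph{white} points. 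Hence the coloring is determined by the frozen points alone, which is what makes $\lambda(\pi,\sigma)=\lambda(\pi',\sigma)$ plausible; one still must check that rearranging the blue points creates no new witnessing occurrences northeast of a white box, and that verification is the nontrivial content of the proof in \cite{BWX}. Your step (ii) is fine in spirit, though you should note that the $J_k$-part of a hypothetical $J_k\oplus\sigma$ copy lies in a full rectangle of $\lambda$ (its northeast corner box is blue and the blue region is a Young diagram), since containment for traversals requires a complete rectangle.
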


\section{Analytical Tools}\label{onesie}
The following lemma will be a useful tool in showing permuton collapse to the anti-diagonal.  

\begin{lemma}[One-sided lemma]\label{oneside}
    Fix small positive $\delta\in (0,.5)$.  If $\mu$ is a permuton such that $\mu(\utri_{\delta}) < \delta$, then for $\epsilon > 2/\log(\delta^{-1}) > \delta$, $\mu(\ltri_\epsilon) < \epsilon$ and $\mu(W_{2\epsilon})< 2\epsilon$.
\end{lemma}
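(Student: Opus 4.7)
The plan is to leverage the uniform-marginals identity
\[
\int_{[0,1]^2} (x+y)\, d\mu(x,y) \;=\; 1,
\]
which is immediate since each coordinate integrates to $1/2$ against its uniform marginal. The three regions $\utri_\delta$, $\ltri_\epsilon$, and the middle strip $M := \{(x,y) \in [0,1]^2 : 1-\epsilon \le x+y \le 1+\delta\}$ partition $[0,1]^2$ up to a null set. First I would bound $x+y$ from above on each piece by its maximum value ($2$, $1-\epsilon$, and $1+\delta$ respectively), substitute these into the moment identity together with $\mu(M) = 1 - \mu(\utri_\delta) - \mu(\ltri_\epsilon)$, and simplify. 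An elementary algebraic step collapses the inequality to
\[
(\epsilon+\delta)\,\mu(\ltri_\epsilon) \;\le\; (1-\delta)\mu(\utri_\delta) + \delta \;<\; 2\delta,
\]
so $\mu(\ltri_\epsilon) < 2\delta/(\epsilon+\delta)$.

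Next I would carry out a parameter chase to upgrade this to $\mu(\ltri_\epsilon) < \epsilon$. It suffices to have $2\delta \le \epsilon^2$, and after rewriting the hypothesis $\epsilon > 2/\log(\delta^{-1})$ as $\delta < e^{-2/\epsilon}$ I would verify $e^{-2/\epsilon} \le \epsilon^2/2$ on $(0,1]$ by noting that the function $2/\epsilon + 2\log\epsilon$ is decreasing on $(0,1]$ with minimum $2 \ge \log 2$ at $\epsilon = 1$. A similar elementary monotonicity check shows the hypothesis also forces $\delta < \epsilon$, whereupon the second conclusion follows immediately from the containments $\utri_{2\epsilon} \subset \utri_\delta$ and $\ltri_{2\epsilon} \subset \ltri_\epsilon$: since $W_{2\epsilon}$ is the disjoint union of these two pieces, $\mu(W_{2\epsilon}) < \delta + \epsilon < 2\epsilon$.

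The only real obstacle is recognizing the first-moment identity $\int (x+y)\, d\mu = 1$; once it is in hand, the remaining argument is a single linear inequality plus routine parameter estimates, with no geometric or combinatorial subtlety. As a sanity check, the logarithmic hypothesis $\delta < e^{-2/\epsilon}$ is considerably stronger than the essentially quadratic bound $\delta \lesssim \epsilon^2/2$ that the argument actually consumes, so there is some slack; presumably the logarithmic form is chosen for convenient compatibility with the values of $\delta$ and $\delta'$ produced by the structural results in Section \ref{structure}.
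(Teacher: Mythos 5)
Your proof is correct, and it takes a genuinely different route from the paper's. The paper also exploits the uniform marginals, but combinatorially: it slices a neighborhood of the anti-diagonal into vertical strips $V_i$ and horizontal strips $H_i$ of width $\epsilon$ (the latter padded by $\delta$), notes $\mu(H_i)-\mu(V_i)=\delta$, and propagates the hypothesis $\mu(\utri_\delta)<\delta$ strip by strip via the inclusion $V_i^-\subset\bigcup_{j<i}H_j^-$, obtaining $\mu(H_i^-)<2^{i+1}\delta$ by induction and hence $\mu(\ltri_\epsilon)<2^m\delta<\delta^{1-\log 2}$. Your first-moment identity $\int(x+y)\,d\mu=1$ packages the same marginal information in one stroke, and the resulting linear inequality $(\epsilon+\delta)\mu(\ltri_\epsilon)\le(1-\delta)\mu(\utri_\delta)+\delta<2\delta$ is both shorter and quantitatively sharper: the bound $2\delta/(\epsilon+\delta)$ is linear in $\delta$ rather than a fractional power, so your argument would survive under the much weaker hypothesis $2\delta\le\epsilon^2$, whereas the paper's doubling scheme genuinely consumes $\epsilon\gtrsim 1/\log(\delta^{-1})$. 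Your parameter checks are sound (the monotonicity of $2/\epsilon+2\log\epsilon$ on $(0,1]$ gives $\delta<e^{-2/\epsilon}\le\epsilon^2/2$, hence also $\delta<\epsilon$; the case $\epsilon>1$ is vacuous since $\ltri_\epsilon=\emptyset$), and your closing step $\mu(W_{2\epsilon})\le\mu(\utri_\delta)+\mu(\ltri_\epsilon)<\delta+\epsilon<2\epsilon$ coincides with the paper's. Two cosmetic remarks: the three regions partition $[0,1]^2$ exactly, so no null-set caveat is needed, and the boundary lines carry no mass anyway because the marginals are uniform.
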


\begin{proof}
    For simplicity, we let $\epsilon = 1/m$ where $m < \log(\delta^{-1})/2$.  For $0 \leq i \leq m-2$, consider the two strips $V_i = [1-(i+1)\epsilon,1-i\epsilon)\times [0,1)$ and $H_i= [0,1)\times [i\epsilon,(i+1)\epsilon + \delta)$. These satisfy $\mu(V_i) = \epsilon,$ $\mu(H_i) = \epsilon+\delta$.  We partition these two sets into a union of disjoint sets, $V_i = V_i^+\cup V_i^\circ \cup V_i^-$ and $H_i = H_i^+\cup H_i^\circ \cup H_i ^-$, as shown in Figure \ref{hsandvs}, where: 
   
 \begin{figure}[htbp]
 \centering
 	\includegraphics[scale=.5]{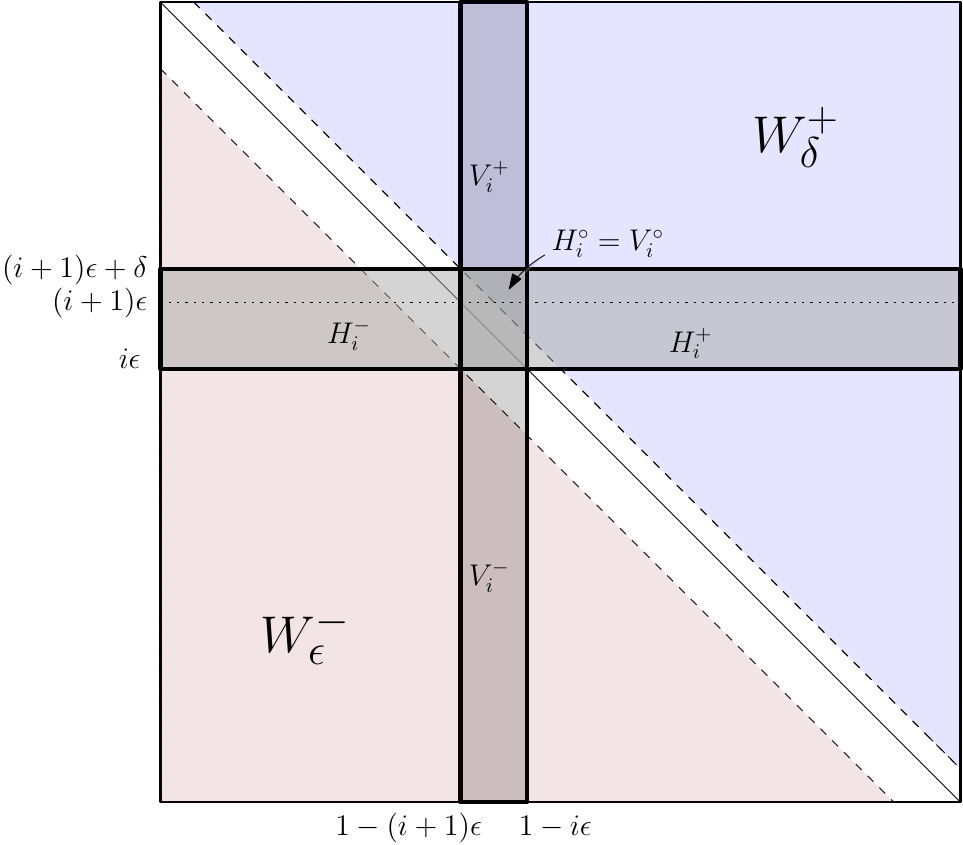}
    \caption{An example of a partition of $H_i = H_i^-\bigcup H_i^\circ\bigcup H_i^+$ and $V_i = V_i^+ \bigcup V_i^\circ \bigcup V_i^-$. The blue shaded region represents $\utri_\delta$ while the pink shaded region is $\ltri_\epsilon$.  Note that $V_i^+ \subset \utri_\delta$, $V_i^- \subset \bigcup_{j=i+1}^{m-2} H^-_j$, and $\ltri_\epsilon \subset \bigcup_{i=0}^{m-2} H_i^-$. }
    \label{hsandvs}
\end{figure}   
    
    \begin{itemize}
    \item $V_i^\circ = H_i^\circ = V_i \cap H_i$,  
    \item $V_i^+  =[1-(i+1)\epsilon,1-i\epsilon)\times[(i+1)\epsilon+\delta, 1)$,
    \item $V_i^-  =[1-(i+1)\epsilon,1-i\epsilon)\times[0,i\epsilon)$,
    \item $H_i^+ = [1-i\epsilon, 1) \times [i\epsilon,(i+1)\epsilon+\delta)$,
    \item $H_i^- = [0,1-(i+1)\epsilon) \times [i\epsilon,(i+1)\epsilon+\delta)$.
\end{itemize}

The set $\ltri_\epsilon$ is a subset of $\bigcup_{0\leq i \leq m-2}H_i^-.$  Thus the proof will follow by showing that \[\sum_{i = 0}^{m-2} \mu(H_i^-) < \epsilon.\]   
We claim that $\mu(H_i^+)< 2^{i+1}\delta$.  The proof of this claim proceeds via induction.  For $0\leq i \leq m-2$ we have 
\[
\mu(H_i) - \mu(V_i) = \mu(H_i^-) + \mu(H_i^\circ) + \mu(H_i^+) - \mu(V_i^-) - \mu(V_i^\circ)- \mu(V_i^+) = \delta.
\]
Noting that $\mu(H_i^\circ)$ and $\mu(V_i^\circ)$ cancel each other out and $\mu(V_i^+) < \delta$, we have by rearranging terms
\begin{align*}
\mu(H_i^-) &= \delta + \mu(V_i^-) + \mu(V_i^+) - \mu(H_i^+) \\
& \leq \delta + \mu(V_i^-) + \delta\\
&= 2\delta + \mu(V_i^-).
\end{align*}

For $i = 0$, we note that $V_0^- = H_0^+ = \emptyset$ and thus
\[
    \mu(H_0^-) < 2\delta,
\]
establishing the base case.  For $1\leq i \leq m-2$, suppose the claim holds for $H^-_j$ where $j< i$.  We observe that $V_i^- \subset \bigcup_{0\leq j <i} H^-_j$ and therefore $\mu(V_i^-) \leq \sum_{j=0}^{i-1} \mu(H^-_j)$.  Then by the induction hypothesis we have
\[
\mu(H_i^-) < 2\delta + \sum_{j=0}^{i-1} 2^{j+1}\delta = 2^{i+1}\delta,
\]
and thus the claim holds for all $0 \leq i \leq m-2$.  Then,
\[
\mu(\ltri_\epsilon) \leq \sum_{i=0}^{m-1} \mu(H_i^-) < \sum_{i=0}^{m-2} 2^{i+1}\delta < 2^m\delta.
\] 
If $\epsilon > 2/\log(1/\delta)$, then $m< \log(1/\delta)$ and the above bound becomes
\[
\mu(\ltri_\epsilon)< \delta^{1-\log 2}.
\]
For $0< \delta<1$ we have $\delta^{1-\log(2)}< 2/\log(1/\delta) < \epsilon$, and thus we may conclude 
$\mu(\ltri_\epsilon)< \epsilon$.  For $\delta<.5$ and $\epsilon > 2/\log(1/\delta)$ we have that $\delta< \epsilon$.  Furthermore, \[\mu(W_{2\epsilon}) \leq \mu(W_\epsilon) \leq \mu( W^-_{\epsilon}) + \mu(W^+_{\delta}) < 2\epsilon.\]  We note that as $\delta$ approaches $0$, $2/\log(1/\delta)$ also approaches $0$, and thus $\epsilon$ can be made arbitrarily small by choosing sufficiently small $\delta.$
\end{proof}


\begin{lemma}\label{weakconv}
    Let $\mu_n$ be a sequence of permutons where for any $\epsilon >0$, there exists $N$ large enough such that for $n > N$, $\mu_n(W_\epsilon) < \epsilon$.  Then $\mu_n$ converges weakly to $\muJ$.    
\end{lemma}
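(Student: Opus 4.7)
The plan is to use the sequential compactness of the space of permutons under weak convergence, reducing the problem to identifying all subsequential limits.

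First, $\mathcal{M}$ is sequentially compact in the topology of weak convergence: since $[0,1]^2$ is compact, every family of Borel probability measures on it is tight, and the uniform-marginal condition defining a permuton passes to weak limits by continuity of the coordinate projections. Therefore, to prove $\mu_n \to \muJ$ weakly, it suffices to show that every weakly convergent subsequence of $\{\mu_n\}$ has limit $\muJ$.

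Fix a subsequence $\{\mu_{n_k}\}$ converging weakly to a permuton $\nu$. Since $W_\epsilon$ is open for each $\epsilon > 0$, the Portmanteau theorem combined with the hypothesis gives
\[
\nu(W_\epsilon) \leq \liminf_{k \to \infty} \mu_{n_k}(W_\epsilon) \leq \epsilon.
\]
For fixed $\epsilon > 0$ and any $\delta \in (0,\epsilon)$, the containment $W_\epsilon \subset W_\delta$ yields $\nu(W_\epsilon) \leq \nu(W_\delta) \leq \delta$; letting $\delta \downarrow 0$ forces $\nu(W_\epsilon) = 0$. By continuity of $\nu$ from below along the increasing union $\bigcup_{k \geq 1} W_{1/k} = \{(x,y) \in [0,1]^2 : x + y \neq 1\}$, it follows that
\[
\nu\bigl(\{(x,y) \in [0,1]^2 : x+y \neq 1\}\bigr) = \lim_{k \to \infty} \nu(W_{1/k}) = 0,
\]
so $\nu$ is supported on the anti-diagonal $\{x+y = 1\}$.

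Any probability measure supported on the anti-diagonal is the pushforward of its first marginal under the map $x \mapsto (x, 1-x)$; since $\nu$ is a permuton, its first marginal is Lebesgue measure on $[0,1]$, and hence $\nu = \muJ$. The only technical step is the Portmanteau-plus-monotonicity argument pinning down the support; the rest is the routine compactness-plus-uniqueness pattern. I don't anticipate a serious obstacle, since the hypothesis is tailored exactly to bound $\nu$ on the open sets $W_\epsilon$, and the uniform marginal constraint of a permuton leaves only one candidate measure on the anti-diagonal.
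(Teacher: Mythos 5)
Your proof is correct, but it takes a genuinely different route from the paper's. The paper argues directly and quantitatively: for each axis-aligned rectangle $R$ it sandwiches $\mu_n(R)$ between $\mu_J(R)\pm 3\epsilon$ using vertical strips around the piece of the anti-diagonal inside $R$ together with the bound $\mu_n(W_\epsilon)<\epsilon$, and then invokes the fact that rectangles are a convergence-determining class. You instead run the soft compactness argument: extract a weakly convergent subsequence (Prokhorov on the compact space $[0,1]^2$, plus the observation that the uniform-marginal property survives weak limits), use Portmanteau on the open sets $W_\epsilon$ together with the monotonicity $W_\epsilon\subset W_\delta$ for $\delta<\epsilon$ to force $\nu(W_\epsilon)=0$ for every $\epsilon$, conclude via continuity from below that $\nu$ is supported on $\{x+y=1\}$, and then identify $\nu=\mu_J$ as the unique permuton on that line. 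All of these steps check out; in particular $\bigcup_k W_{1/k}=\{x+y\neq 1\}$ and the pushforward identification of a measure on the anti-diagonal by its first marginal are exactly right. What each approach buys: the paper's argument is elementary and yields a uniform quantitative estimate $\sup_R|\mu_n(R)-\mu_J(R)|<3\epsilon$, which is occasionally useful in its own right; yours is shorter, leans on the compactness of the permuton space (which the paper already asserts in the introduction), and avoids the slightly delicate bookkeeping with the strips $C^\pm_\epsilon$, at the cost of invoking Prokhorov and metrizability of the weak topology. Either proof is acceptable.
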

\begin{proof}

The set of rectangles $\mathcal{R}$ form a basis for the Borel sets of $[0,1]^2$ and thus by the Portmanteau theorem, weak convergence follows if $\sup_{R\in\mathcal{R}}|\mu_n(R) - \muJ(R)| \to 0$. 

Fix $\epsilon > 0$.  Let $n$ be large enough such that $\mu_n(W_\epsilon)< \epsilon$ and let $R \in \mathcal{R}$.   We denote the smallest axis aligned square that contains the intersection of $R$ with the anti-diagonal by $S\subset R$ so that $\mu_J(R) = \mu_J(S)$.  Let $X = [a,b]$ be the projection of $S$ on the $x-$axis.  Then $\mu_J(R) = \mu_J(S) = b-a$.  Let $X^+_\epsilon = [a-\epsilon,b+\epsilon]$ and $X^-_\epsilon = [a+\epsilon, b-\epsilon]$.  If $\mu_J(S) = b-a < 2\epsilon$ then $X^-_\epsilon = \emptyset.$ Finally let $C^{\pm}_\epsilon = X^{\pm}_\epsilon \times [0,1]$.  With these definitions we have that $R\subset C^+_\epsilon \cup W_\epsilon$ and thus 
\[
\mu_n(R) \leq \mu_n(C^+_\epsilon) + \mu_n(W_\epsilon) < (b-a) + 2\epsilon + \epsilon.
\]
On the other hand we also have that $C^-_\epsilon\backslash R \subset W_\epsilon$ and thus
\[
\mu_n(R) > \mu_n(C^-_\epsilon) - \mu_n(W_\epsilon) > (b-a) - 2\epsilon - \epsilon.
\]
These two inequalities combine to show that $|\mu_n(R) - \mu_J(R)| < 3\epsilon.$  Thus $\sup_{R\in \mathcal{R}} |\mu_n(R) - \muJ(R) | \to 0$ as $\epsilon$ can be chosen to be arbitrarily small. 
\end{proof}

\section{Structure of permutations}\label{structure}

We describe a canonical partition of $[n]$ with respect to a permutation $\sigma \in \avn{I_{d+1}}$ in order to keep track of how points of $\sigma$ in each part are moved around under a sequence of bijections.  This partition and related properties follow by translating results in \cite{hrs4} for $\tau \in \avn{J_{d+1}}$ and looking at $\sigma = \tau^c$.

For a permutation $\sigma\in \avn{I_{d+1}}$ of size $n$, and a subset of the indices $C\subset [n]$, we say $i\in C$ is a left-to-right minimum of $C$ with respect to $\sigma$ if for $j \in C$ with $j<i$, $\sigma(j) > \sigma(i)$.  Let $A^1_\sigma\subset [n]$ denote the left-to-right minima of $[n]$ with respect to $\sigma$. Then define recursively for $l>1$, $A^l_\sigma$ to be the left-to-right minima of $[n] \setminus \bigcup_{i = 1}^{l-1} A^i$ with respect to $\sigma$.  This partitions $[n]$ into $d$ disjoint (possibly empty) subsets.  This partition has the property that for $l>1$ and $i\in A^l_\sigma$, there exists a sequence of indices $i_1 < \cdots < i_l = i$ such that $i_j \in A^j$ for $1\leq j \leq l$ and $\sigma(i_1)< \cdots < \sigma(i_l)$.  This canonical partition does not guarantee the existence of a complete increasing sequence $i_1 < \cdots< i_l= i < \cdots < i_d $ with $i_j \in A^j$ for $1\leq j \leq d$.  However, we will show that for most permutations and most indices in $[n]$ such a complete increasing sequence does exist.


We obtain a collection of $d$ continuous functions $P_\sigma = (f^1_\sigma,\cdots f^d_\sigma)$ by linearly interpolating between the points $(0,0)$, $(1,0)$ and \[\left (\frac{i}{n+1},\frac{\sigma(i) + i- (n+1)}{\sqrt{2dn}}\right )_{i \in A^l_{\sigma}}\] 
for each $l\in [d]$.  
We translate results from \cite{hrs4} for $\sigma^* \in \avn{J_d}$ to our situation via the bijection $\sigma = \sigma^{*c}$. Translating Theorem 2.1 in \cite{hrs4} shows that this collection of functions converges in distribution with respect to the sup norm topology to $\Lambda = (\lambda^1,\cdots, \lambda^d)$ where the $\lambda^l$'s are each a Brownian bridge from $(0,0)$ to $(1,0)$ jointly conditioned to satisfy:
\begin{enumerate}
    \item $\lambda_l(0) = \lambda_l(1) = 0$ for all $l\in d$,
    \item $\lambda_1(t) < \lambda_2(t) < \cdots < \lambda_d(t)$, for $t \in (0,1)$
    \item $\sum_{l=1}^d \lambda_l(t) = 0.$
\end{enumerate}
This object is called a traceless Dyson Brownian bridge.  See the appendix in \cite{hrs4} for details relating the traceless Dyson Brownian bridge to the collection of $d$ Brownian bridges conditioned to satisfy the above properties. 


We define $j^l(i) := \max\{j< i: j \in A^l_\sigma\}$ unless $A^l_\sigma$ does not contain any points less than $i$, in which case $j^l(i)$ takes on the default value of $0$.  For $\epsilon >0$, let $\avne{I_d}{\epsilon}$ denote the subset of permutations in $\avn{I_d}$ that satisfy the following properties:
\begin{enumerate}
    \item $\sup_{i \in [n] }| \sigma(i) + i-n-1| < n^{.6}$.
    \item For $\epsilon n \leq i < j \leq (1-\epsilon )n$ with $j-i >n^{.1}$, and for each $1\leq l \leq d$, \[\left||A^l(\sigma)\bigcap [i,j]| - d^{-1}(j-i)\right| < |j-i|^{.6}.\] 
    \item For $\epsilon n \leq i < j \leq (1-\epsilon)n$ with $j-i> n^{.1}$ and $i,j \in A^l(\sigma)$, \[|\sigma(i) +i -\sigma(j)-j| < |j-i|^{.6}.\]
    \item For any $1\leq l \leq d$, and $i \in  [\epsilon n , (1-\epsilon)n]$, 
    \(
    i-j^{l}(i)< n^{.2}. 
    \) 
    \item For $1\leq l<d$ and $i \in A^l_\sigma\cap[ \epsilon n , (1-\epsilon )n]$,
    \(
        \sigma(j^{l+1}(i)) -\sigma(i) > n^{.4}.
    \)
\end{enumerate}
\begin{lemma}\label{goodprops}
	Fix $\epsilon>0$ and let $\prob_n$ denote uniform measure on $\avn{I_{d+1}}$.  Then $\prob_n(\sigma \in \avne{I_{d+1}}{\epsilon}) \to 1.$ 
\end{lemma}

\begin{proof}

The proof will follow from results in \cite{hrs4}.  Similar to Section 2 of \cite{hrs4}, for a permutation $\sigma \in \avn{I_{d+1}}$ we define the projection to the pair of sequences $\omega_\sigma \in [d]^n\times [d]^n$ where $\omega_\sigma(i) = (l_1,l_2)$ if $i \in A^{l_1}_\sigma$ and $\sigma^{-1}(i)\in A^{l_2}_\sigma$.  Let $a\times b = \omega_\sigma$.  In Section 4 of \cite{hrs4}, the authors define a set of properties for pairs of sequences called the ``Petrov Conditions.''  The conditions translate to the first four conditions of $\avne{I_{d+1}}{\epsilon}$ via a series of Lemmas in that section.  Lemma 7.11 from \cite{hrs4} shows that for sequences that come from $\avn{I_{d+1}}$, the ``Petrov Conditions'' hold asymptotically almost surely. 


To show the last property holds for $\avn{I_{d+1},\epsilon}$, we will assume the first four are already true.  Let $X$ be the $\inf_{1\leq l <d, t\in [\epsilon,1-\epsilon]} (\lambda^{l+1}(t) - \lambda^l(t) )$.  Let $Y_n$ be similar defined for $(f^1_\sigma,\cdots, f^d_\sigma)$ where $\sigma$ is chosen uniformly form $\avn{J_d}.$  The second property in the definition of Dyson Brownian bridge along with continuity of the component paths combine to show that $\prob(X>c) \to 1$ as $c \to 0.$  Convergence in distribution of $(f^1_\sigma, \cdots , f^d_\sigma)$ to $(\lambda^1,\cdots, \lambda^d)$ implies that $Y_n \to_d X$ as well, and thus $\prob(Y_n> n^{-.05})$ tends to 1 as $n$ increases.  If $Y_n > n^{-.05}$, then $\sqrt{2dn}(f^{l+1}_\sigma(t) -f^l_\sigma(t)) > n^{.45}$.  Fix $t \in [\epsilon , 1-\epsilon]$.  Let $i = \lfloor nt \rfloor$.  Then $i$ is in $A^l_\sigma$ for some $l$.  If $l<d$, let $n(t-sn^{-.8}) = j^{l+1}(i)$.  By the second to last property of $\avne{I_{d+1}}{\epsilon}$, $s$ will lie in the interval $(0,1)$.  Using the third property we see that $\sqrt{2dn}(f^{l+1}_\sigma(t)- f^{l+1}_\sigma(t-sn^{-.8})) < n^{.2}.$  Combining everything we have with probability that tends to 1,
\begin{align*}
    \sigma(j^{l+1}(i))- \sigma(i) > & \sqrt{2dn}(f^{l+1}_\sigma((t-sn^{-.8}) - f^l_\sigma(t)) - n^{.2} \\
    >& \sqrt{2dn}(f^{l+1}_\sigma(t) - f^l_\sigma(t)) - 2n^{.2}\\
    >& n^{.45} - 2n^{.2}\\
    >& n^{.4}.
\end{align*}
\end{proof}

Lemma \ref{goodprops} implies a useful lemma that will allow us to pin down the frozen region under certain Backelin-West-Xin bijections.

\begin{lemma}\label{sequence}
Fix $\epsilon > 0$ and let $\sigma \in \avne{I_{d+1}}{\epsilon}$.  For each $1\leq l \leq d$ and each $i \in [2\epsilon n, (1-2\epsilon)n]\cap A^l_{\sigma}$, there exists a sequence $i_1 < \cdots < i_d$ with $i_k \in A^k_\sigma$ such that $i = i_l$ and $\sigma(i_1) < \cdots < \sigma(i_d)$. 
\end{lemma}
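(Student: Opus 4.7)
Since $i \in A^l_\sigma$, the recursive definition of the partition immediately hands us the left half of the chain: indices $i_1 < \cdots < i_l = i$ with $i_k \in A^k_\sigma$ and $\sigma(i_1) < \cdots < \sigma(i_l)$. It therefore suffices to construct $i_{l+1} < \cdots < i_d$ strictly to the right of $i$ with $i_k \in A^k_\sigma$ and $\sigma(i_l) < \sigma(i_{l+1}) < \cdots < \sigma(i_d)$. The plan is a one-step induction: given $i_k$ in the interior of $[n]$, I will produce $i_{k+1} \in A^{k+1}_\sigma$ with $i_k < i_{k+1} \leq i_k + 3 n^{.2}$ and $\sigma(i_{k+1}) > \sigma(i_k)$. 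Iterating this $d - l$ times keeps every $i_k$ within distance $3(d-l)n^{.2} = o(n)$ of $i$, so each $i_k$ stays inside $[\epsilon n, (1-\epsilon)n]$ and properties (b)--(e) of $\avne{I_{d+1}}{\epsilon}$ remain in force at every step.

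For the inductive step, use property (b) to select $i^* \in A^k_\sigma \cap [i_k + 2n^{.2}, i_k + 3n^{.2}]$; the expected cardinality of this intersection is $n^{.2}/d$ with error $O(n^{.12})$, and is in particular positive for $n$ large. Property (c) applied to the pair $i_k, i^* \in A^k_\sigma$ gives
\[
\sigma(i^*) \;>\; \sigma(i_k) - (i^* - i_k) - (i^* - i_k)^{.6} \;>\; \sigma(i_k) - 4 n^{.2}.
\]
Now set $i_{k+1} := j^{k+1}(i^*) \in A^{k+1}_\sigma$. Property (e) applied to $i^*$ at level $k$ yields $\sigma(i_{k+1}) > \sigma(i^*) + n^{.4}$, so combining with the previous display,
\[
\sigma(i_{k+1}) \;>\; \sigma(i_k) + n^{.4} - 4 n^{.2} \;>\; \sigma(i_k)
\]
for $n$ large enough.

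The only delicate point is verifying $i_{k+1} > i_k$, since $i_{k+1}$ is defined as an $A^{k+1}$-element to the \emph{left} of $i^*$ and could in principle fall on the wrong side of $i_k$. This is exactly why I insisted on the buffer $i^* - i_k \geq 2n^{.2}$: property (d) gives $i^* - i_{k+1} < n^{.2}$, hence
\[
i_{k+1} \;>\; i^* - n^{.2} \;\geq\; (i_k + 2n^{.2}) - n^{.2} \;=\; i_k + n^{.2},
\]
so $i_{k+1}$ lands in the window $(i_k, i^*] \subset (i_k, i_k + 3n^{.2}]$, as required. This completes the inductive step; iterating from $k = l$ up to $k = d-1$ produces the full chain $i_1 < i_2 < \cdots < i_d$ with $i_k \in A^k_\sigma$ and $\sigma(i_k)$ strictly increasing.
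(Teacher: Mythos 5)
Your proof is correct, but it takes a genuinely different route from the paper's. The paper also begins with the chain $i_1 < \cdots < i_l = i$ supplied by the recursive definition of the partition, but to extend it rightward it constructs two auxiliary sequences: a \emph{leftward} staircase $i_d' < \cdots < i_{l+1}' < i_l$ obtained by iterating $j^{k+1}$, which by properties (d) and (e) stays within $dn^{.2}$ of $i_l$ and gains at least $n^{.4}$ in $\sigma$-value at each level, and a separate \emph{rightward} sequence $i_l < i_{l+1} < \cdots < i_d$ with $i_k \in A^k_\sigma$ whose existence is extracted from the density property (b); it then transfers the $\sigma$-increments from the primed staircase to the unprimed sequence using property (c), concluding $\sigma(i_k) - \sigma(i_{k-1}) > n^{.4} - O(n^{.2}) > 0$. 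You instead build a single rightward zigzag one level at a time: step right by between $2n^{.2}$ and $3n^{.2}$ inside $A^k_\sigma$ (property (b)), losing at most $4n^{.2}$ in $\sigma$-value (property (c)), then hop to $j^{k+1}(i^*) \in A^{k+1}_\sigma$, which by (d) backtracks by less than $n^{.2}$ --- so your $2n^{.2}$ buffer keeps the index strictly increasing --- and by (e) gains $n^{.4}$ in $\sigma$-value. Both arguments hinge on the same comparison $n^{.4} \gg n^{.2}$ and use the same four properties, but yours makes fully explicit both the construction of the rightward indices and the verification that each $i_{k+1}$ really lies to the right of $i_k$, points the paper's one-line appeal to property (b) leaves implicit. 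One shared caveat: like the paper's proof (which needs $n^{.4} - 8dn^{.2} > 0$), your argument requires $n$ larger than a constant depending on $d$ (e.g.\ $n^{.4} > 4n^{.2}$ and $n^{.2}/d > n^{.12}$); this is harmless for the asymptotic use of the lemma but is not reflected in its statement.
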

\begin{proof}
Let $i\in A^l(\sigma) \cap [2\epsilon n, (1-2\epsilon)n]$.  By the definition of $A^l(\sigma)$, there must by $i_1 < \cdots i_{l-1} < i$ that already satisfy $i_k \in A^k(\sigma).$  The key is showing that for $i \in (2\epsilon n, (1-2\epsilon)n)$, the sequence continues after $i_l = i$ all the way to $i_d$.  Combining the last two properties of $\avne{I_{d+1}}{\epsilon}$ there is a sequence $\epsilon n < i_d' < i_{d-1}' \cdots < i_{l}$ such that $i_l- i_d' < dn^{.2}$ and $\sigma(i'_k) - \sigma(i'_{k-1}) > n^{.4}$ for $l+1 \leq k \leq d.$  By the second property of $\avne{I_{d+1}}{\epsilon}$ there is a sequence $i_l< i_{l+1}<  \cdots < i_d < (1-\epsilon)n$ such that $i_d - i_{l} < dn^{.2}$ and each $i_k \in A^k(\sigma)$ for $l < k \leq d.$  Finally, $|\sigma(i_k') - \sigma(i_k)| < 2dn^{.12} + 2dn^{.2}$ by the last property of $\avne{I_{d+1}}{\epsilon}$ and thus $ \sigma(i_k) - \sigma(i_{k-1}) > n^{.4} - 8dn^{.2} > 0$, finishing the proof.  
\end{proof}



\section{Proof of Theorem \ref{mainthm}}
Letting $d+1 = k_1 + k_2 + k_3,$ we start with a permutation $\sigma \in \avn{I_{d+1}}$ (suppressing the subscript $n$ for readability).  We let $\rho\in \avn{J_{k_1} \oplus I_{k_2} \oplus I_{k_3}}$ denote the image of $\sigma$ via the Backelin-West-Xin bijection defined in Section \ref{biject}.  Then $\rho^{rc}\in \avn{I_{k_3} \oplus I_{k_2} \oplus J_{k_1}}$ and we let $\pi^{rc} \in \avn{J_{k_3} \oplus I_{k_2} \oplus J_{k_1}}$ denote the image of $\rho^{rc}$ under the Backelin-West-Xin bijection.  Finally we end up with $\pi\in \avn{J_{k_1} \oplus I_{k_2} \oplus J_{k_3}}$ by taking the reverse complement of $\pi^{rc}.$  The main idea of the proof for Theorem \ref{mainthm} comes from the following lemma.

\begin{lemma}\label{epi}
    Let $\epsilon_1 > 0$ and let $\epsilon_2 > \max( 4\epsilon_1, 2/\log(2\epsilon_1)^{-1})$,  $\epsilon> 4/\log(\epsilon_2^{-1})$.  For integers $k_1>0$, $k_2\geq 0 $, and $k_3>0.$  Let $\pi \in \avn{J_{k_1}\oplus I_{k_2} \oplus J_{k_3}}$ be the image of $\sigma \in \avne{I_{k_1} \oplus I_{k_2} \oplus I_{k_3}}{\epsilon_1}.$  Then $\mu_{\pi}(W_\epsilon) < \epsilon.$
\end{lemma}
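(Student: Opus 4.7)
The plan is to follow the chain of bijections $\sigma\to\rho\to\rho^{rc}\to\pi^{rc}\to\pi$ from the proof sketch, using the frozen regions of the two Backelin--West--Xin bijections to transfer measure bounds across them, and two applications of Lemma~\ref{oneside} together with the reverse-complement symmetry to convert one-sided bounds on $W^+$ regions into the desired two-sided bound on $W_\epsilon$. Throughout, $n$ is taken sufficiently large depending on $\epsilon_1$.

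For the step from $\sigma$ to $\rho$, property (1) of $\avne{I_{d+1}}{\epsilon_1}$ gives $|\sigma(i)+i-(n+1)|<n^{0.6}$ for every $i$, which for $n$ large already yields $\mu_\sigma(W^+_{2\epsilon_1})=0<2\epsilon_1$. The same bound forces the first frozen region to contain $W^+_{2\epsilon_1,n}$: any point $(i',\sigma(i'))$ strictly upper-right of a box $(i,j)$ with $i+j>(1+2\epsilon_1)n$ would satisfy $i'+\sigma(i')\geq i+j+2>(1+2\epsilon_1)n$, contradicting $i'+\sigma(i')\leq n+1+n^{0.6}$, so no $I_{k_2+k_3}$ pattern can live strictly upper-right of $(i,j)$ and the box is white. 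Since $\rho$ and $\sigma$ agree on the frozen region, $\mu_\rho(W^+_{2\epsilon_1})<2\epsilon_1$. I then apply Lemma~\ref{oneside} to $\mu_\rho$ with $\delta=2\epsilon_1$; the hypothesis $\epsilon_2>2/\log((2\epsilon_1)^{-1})$ is precisely what the lemma requires, and it produces $\mu_\rho(W^-_{\epsilon_2})<\epsilon_2$. Passing to $\rho^{rc}$, which maps $W^-$ to $W^+$, gives $\mu_{\rho^{rc}}(W^+_{\epsilon_2})<\epsilon_2$.

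The main technical obstacle is to show that the second frozen region, that of the bijection $\rho^{rc}\to\pi^{rc}$ with $\tau=I_{k_2}\oplus J_{k_1}$, also contains $W^+_{\epsilon_2,n}$. Suppose for contradiction that some box $(i,j)\in W^+_{\epsilon_2,n}$ admits an $I_{k_2}\oplus J_{k_1}$ pattern in $\rho^{rc}$ strictly upper-right. Conjugating by reverse-complement turns this into a $J_{k_1}\oplus I_{k_2}$ pattern in $\rho$ whose points lie strictly lower-left of $(n+1-i,n+1-j)$, so inside $W^-_{\epsilon_2-2/n}$. Since $\rho\in\avn{J_{k_1}\oplus I_{k_2}\oplus I_{k_3}}$, it suffices to find an $I_{k_3}$ in $\rho$ whose indices exceed the trailing $I_{k_2}$ index and whose values exceed the trailing $I_{k_2}$ value, as this would assemble a forbidden $J_{k_1}\oplus I_{k_2+k_3}$ pattern. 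The plan for producing this $I_{k_3}$ is to trace the length-$d$ increasing sequences supplied by Lemma~\ref{sequence} for $\sigma$ through the first Backelin--West--Xin bijection: points of those sequences whose boxes are white are not moved and so remain increasing in $\rho$, while the bound $\mu_\rho(W^+_{2\epsilon_1})=0$ pins $\rho$'s mass below the upper anti-diagonal strip, leaving ample room upper-right of any lower-left point to realize $k_3$ further increasing points. Making this extension step rigorous, in particular tracking which parts of the length-$d$ sequences survive the bijection, is what I expect to require the most care. Granting the claim, the frozen-region identity gives $\mu_{\pi^{rc}}(W^+_{\epsilon_2})=\mu_{\rho^{rc}}(W^+_{\epsilon_2})<\epsilon_2$.

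The conclusion is a second application of Lemma~\ref{oneside}. With $\delta=\epsilon_2$ and the lemma's $\epsilon$ set to $\epsilon/2$, the hypothesis $\epsilon>4/\log(\epsilon_2^{-1})$ translates exactly to the required $\epsilon/2>2/\log(\epsilon_2^{-1})$, and the lemma gives $\mu_{\pi^{rc}}(W_\epsilon)<\epsilon$. Since $W_\epsilon$ is invariant under reverse-complement, $\mu_\pi(W_\epsilon)=\mu_{\pi^{rc}}(W_\epsilon)<\epsilon$, as desired.
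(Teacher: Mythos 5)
Your architecture matches the paper's exactly --- the chain $\sigma\to\rho\to\rho^{rc}\to\pi^{rc}\to\pi$, frozen regions to transfer measure, two applications of Lemma~\ref{oneside} with the same choices of $\delta$, and reverse-complement symmetry --- and the parts you actually execute are correct (your direct verification that $W^+_{2\epsilon_1,n}$ lies in the first frozen region via property (1) of $\avne{I_{d+1}}{\epsilon_1}$, and your bookkeeping of the constants in the two applications of Lemma~\ref{oneside}, are if anything cleaner than the paper's). But the step you explicitly ``grant'' is the crux of the lemma, so as written there is a genuine gap. Moreover the route you sketch for filling it --- tracking which parts of the length-$d$ sequences of Lemma~\ref{sequence} survive the first bijection --- aims at the wrong object: the trailing point of the $I_{k_2}$ in your hypothetical $J_{k_1}\oplus I_{k_2}$ pattern is an \emph{arbitrary} point of $\rho$ in $W^-_{\epsilon_2,n}$, possibly one the first bijection moved, and such a point need not lie on any surviving sequence, so there is nothing to extend from.

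The paper closes this by anchoring on a fixed set rather than on the pattern. Let $A=A^{k_1+k_2}_\sigma\cap(2\epsilon_1 n,(1-2\epsilon_1)n)$. Every $i'\in A$ belongs to a level at least $k_1$ of the canonical partition, hence $(i',\sigma(i'))=(i',\rho(i'))$ is frozen, and Lemma~\ref{sequence} attaches to it an increasing run $i'=i_{k_1+k_2}<\dots<i_{k_1+k_2+k_3-1}$ whose points all lie in levels at least $k_1$ and are therefore also fixed; this yields a copy of $I_{k_3}$ in $\rho$ starting at $(i',\rho(i'))$. Consequently $\rho$ contains no $J_{k_1}\oplus I_{k_2}$ inside $\souw(A,\rho)$: such a pattern would sit strictly southwest of some anchor and concatenate with that anchor's $I_{k_3}$ into the forbidden $J_{k_1}\oplus I_{k_2}\oplus I_{k_3}$. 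Properties (1) and (4) of $\avne{I_{d+1}}{\epsilon_1}$ make the anchors hug the anti-diagonal with gaps $O(n^{.2})$ along $(2\epsilon_1 n,(1-2\epsilon_1)n)$, and the hypothesis $\epsilon_2>4\epsilon_1$ --- which you never invoke, a symptom of the missing step --- is precisely what guarantees that every box of $W^-_{\epsilon_2,n}$ lies southwest of some anchor, i.e.\ $\souw(A,\rho)\supset W^-_{\epsilon_2,n}$. Passing to reverse complements then shows every box of $W^+_{\epsilon_2,n}$ has no $I_{k_2}\oplus J_{k_1}$ strictly to its northeast in $\rho^{rc}$, which is the frozen-region claim you needed; with that supplied, the remainder of your argument goes through.
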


\begin{proof}

\begin{figure}[hptb]
\centering
    \includegraphics[scale = .45]{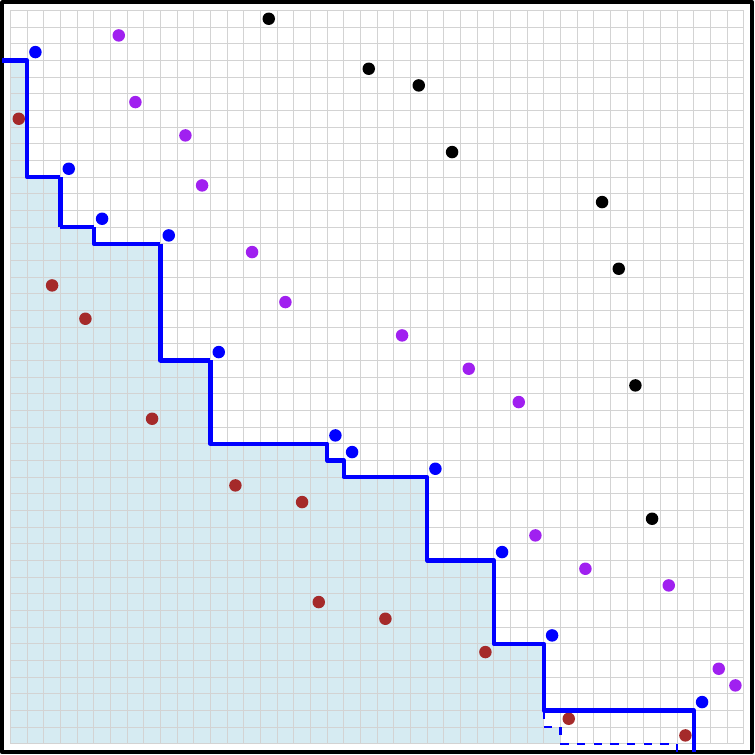} \qquad 
    \includegraphics[scale = .45]{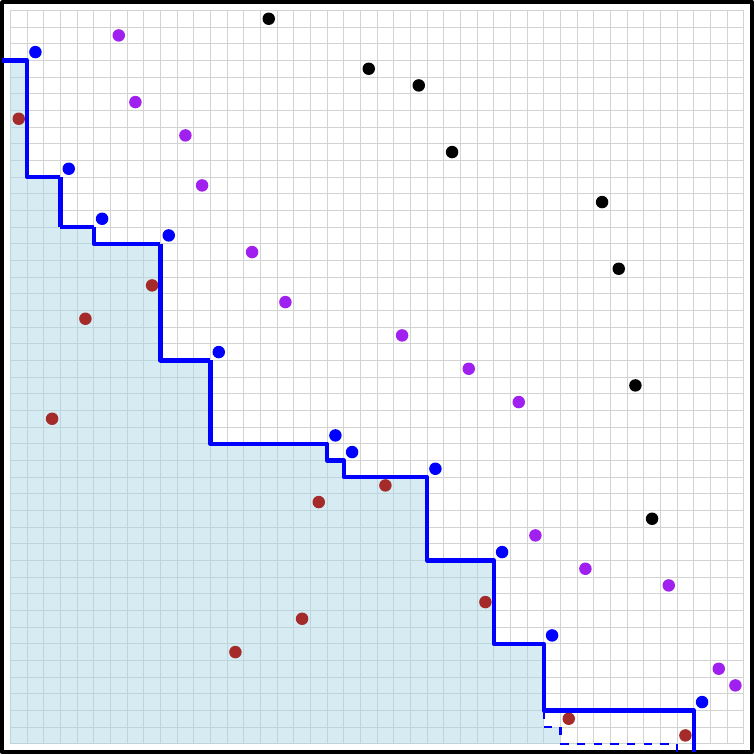}
    \caption{A permutation, $\sigma \in \avn{I_5}$, and its image under the Backelin-West-Xin bijection, $\rho \in \avn{J_2\oplus I_3}$, with region $\souw(A^2_\sigma,\sigma)$ outlined by solid blue line.  The frozen region under this bijection that is not shaded can properly contain the complement of $\souw(A^2_\sigma,\sigma)$ as the two lowest brown shaded points of $A^1_\sigma$ do not have a copy of $I_3$ northeast of those boxes.}
    \label{figAB}
\end{figure}

Let $\rho$, $\rho^{rc}$, $\pi^{rc}$, and $\pi$ be the images under the appropriate bijections defined above.  For any box not in $\souw(A^{k_1}_{\sigma},\sigma)$, there can be no copy of $I_{k_2}\oplus I_{k_3}$ in $\sigma$ northwest of that box.  Thus, the frozen region under the bijection from $\sigma$ to $\rho$ will contain the  complement of $\souw(A^{k_1}_{\sigma},\sigma).$  That is, for $(i,j)\notin \souw(A^{k_1}_{\sigma},\sigma)$, $\rho(i) = j$ if and only if $\sigma(i) = j$.  In particular, all points of the form $\{(i,\sigma(i)): i \in A^l_\sigma\}_{l\geq k_1}$ are contained in the frozen region.  It is possible the frozen region may contain some points outside of this region's complement as well, as shown in Figure \ref{figAB}. Note that by Lemma \ref{goodprops}, the frozen region also contains the region \[
W^+_{n,2\epsilon_1} = \{(x,y) \in R((0,0),(n,n)): x+y > (1+2\epsilon_1)n\},\] which contains no points of the form $(i,\sigma(i))$ and therefore no points of the form $(i,\rho(i)).$  Therefore $\mu_{\rho}(W^+_{2 \epsilon_1}) = 0$ and by Lemma \ref{oneside} $\mu_\rho(W_{\epsilon_2} < \epsilon_2)$.

\begin{figure}[htbp]
\centering
    \includegraphics[scale=.45]{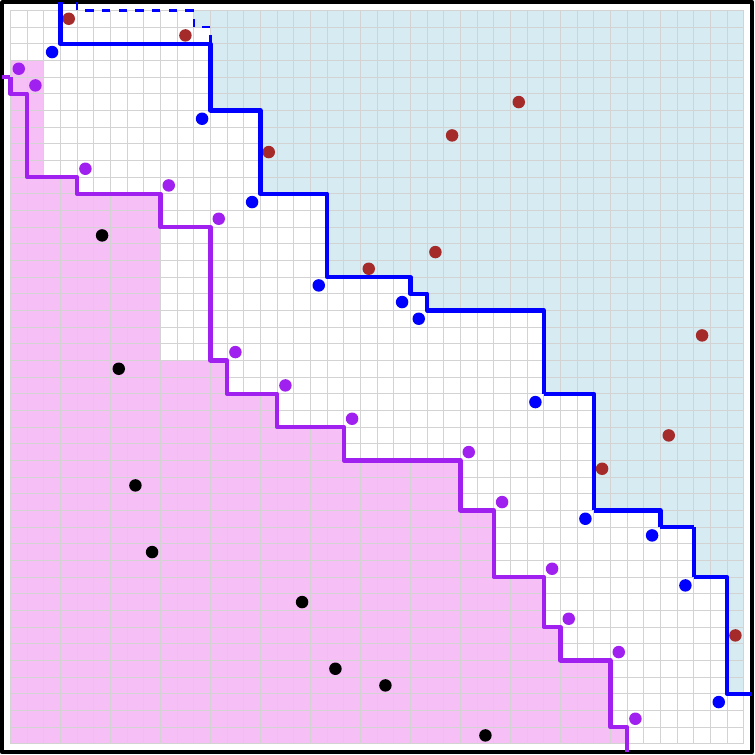} \qquad
    \includegraphics[scale=.45]{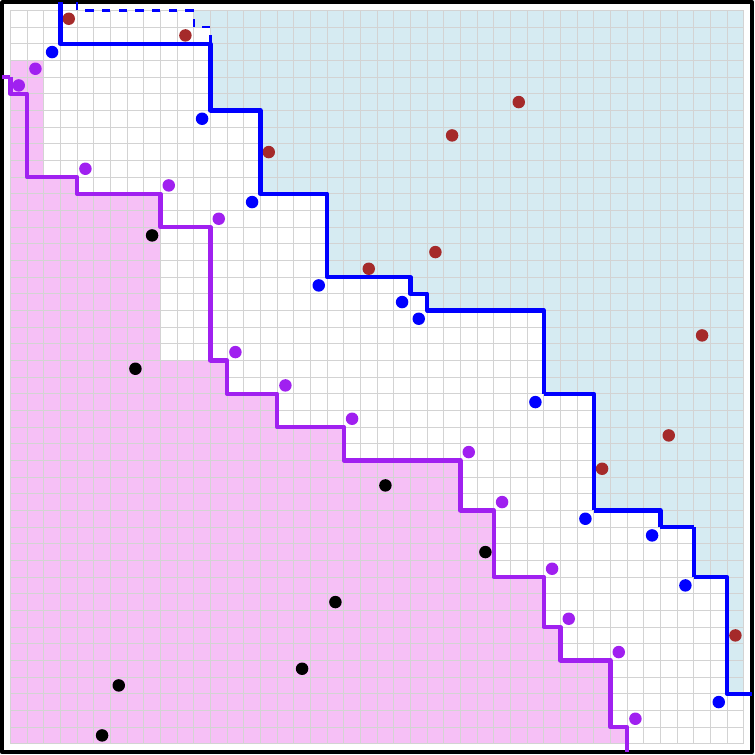}
    \caption{The left picture is the reverse complement of $\rho$ from Fig. \ref{figAB}, $\rho^{rc}\in \avn{I_{3}\oplus J_{2}}$.  The purple curve delineates the boundary of $\souw(B^{2}_{\rho^{rc}},\rho^{rc})$.  The frozen region is contained in the complement of the purple shaded region and does contain some points in the complement of $\souw(B^2_{\rho^{rc}},\rho^{rc})$.  The right picture is the image of $\rho^{rc}$ in $\avn{J_2 \oplus I_1 \oplus J_2}$ under the BWX bijection.  }
    \label{figCD}
\end{figure}

Now consider the decreasing set of points $\{(i,\sigma(i)): i \in A^{k_1+k_2}_\sigma = \{(i,\rho(i)): i \in A^{k_1 + k_2}_\sigma\}$.  If $i' \in A^{k_1+ k_2}_\sigma$ is in the interval $(2\epsilon_1 n,(1-2\epsilon_1) n)$, then from Lemma \ref{sequence}, there exists an increasing sequence of indices $i' = i_{k_1+k_2} < i_{k_1+k_2+1} < \dots < i_{k_1+k_2+k_3-1}$ with $i_l \in A^l_\sigma$ such that $(i_l,\sigma(i_l)) = (i_l,\rho(i_l))$ is increasing for $k_1+k_2\leq l < k_1 + k_2+k_3.$  Let $A = A^{k_1+k_2} \cap (2\epsilon_1 n, (1-2\epsilon_1 )n).$  Since $\rho$ avoids $J_{k_1} \oplus I_{k_2} \oplus I_{k_3}$, then $\rho$ will avoid $J_{k_1} \oplus I_{k_2}$ in the region $\souw(A,\rho)$.  

Consider the permutation $\rho^{rc}$ and the following region:
\[
F = \{(i,j) : (n+1-i,n+1-j) \in \souw(A,\rho)\}.
\]
Since $\rho$ avoids $J_{k_1} \oplus I_{k_2}$ in $\souw(A,\rho)$, the $\rho^{rc}$ will avoid $I_{k_2} \oplus J_{k_1}$ in $F$.  Thus for a box $(i,j) \in F \cup \partial F$, there are no occurrences of $I_{k_2} \oplus J_{k_1}$ strictly northeast of $(i,j)$ and the frozen region under the BWX bijection from $\rho^{rc}$ to $\pi^{rc}$ will contain all of $F \cup \partial F.$  We note that $F \cup \partial F$ will contain the region $W^+_{n,\epsilon_2}$ and $\mu_{\pi^{rc}}(W^+_{\epsilon_2}) = \mu_{\rho^{rc}}(W^+_{\epsilon_2}) < \mu_{\rho}(W_{\epsilon_2}) < \epsilon_2.$
Applying Lemma \ref{oneside} again we have that $\mu_{\pi}(W_\epsilon) = \mu_{\pi^{rc}}(W_{\epsilon}) < \epsilon,$ finishing the proof.
\end{proof}

\begin{proof}[Proof of Theorem \ref{mainthm}]
Fix $\epsilon >0$.  Choose $\epsilon_1$ small enough such that Lemma \ref{epi} applies.  If $\sigma_n \in \avne{I_{k_1}\oplus I_{k_2} \oplus I_{k_3}}{\epsilon_1}$, then $\pi_n\in \avn{J_{k_1} \oplus I_{k_2} \oplus J_{k_3}}$ satisfies $\mu_{\pi_n}(W_\epsilon ) < \epsilon.$  Let $\pi_n$ be chosen uniformly at random from the image of $\avne{I_{k_1} \oplus I_{k_2} \oplus I_{k_3}}{\epsilon_1} \subset \avn{J_{k_1}\oplus I_{k_2} \oplus J_{k_3}}$.  Then by Lemma \ref{epi}, $\mu_{\pi_n}(W_\epsilon)< \epsilon$, and thus by Lemma \ref{weakconv}, $\mu_{\pi_n} \to \muJ$ weakly.  By Lemma \ref{goodprops}, $\pi_n$ is the in the image of $\avne{I_{k_1} \oplus I_{k_2} \oplus I_{k_3}}{\epsilon_1}$ with probability tending to 1 as $n$ increases, thus we may conclude that $\mu_{\pi_n}$ converges in distribution to $\muJ$ with respect to the appropriate topology.    
\end{proof}

\section{Open Questions}

Our results only look at some classes of permutation that are in bijection with those avoiding a monotone increasing sequence.  However, Lemma \ref{oneside} could be useful for showing permuton limits of $\muJ$ for a larger set of pattern-avoiding classes.  Initially we attempted to explore $\avn{1342}$ using results developed by B\'ona \cite{bona1342}.  We could not show a permuton limit, but suspect convergence to $\muJ$ could be shown by understanding how the bijection in \cite{bona1342} with certain labeled trees could give a one sided bound on the permuton limit.  
It would be nice to understand more about the permuton limits for classes avoiding a single pattern of length 4.  Understanding permuton limits for $\avn{1342}$, $\avn{1324}$ and $\avn{2413}$ would complete the picture for permuton limits of classes avoiding a single pattern of size $4$.  We conjecture that both $\avn{1342}$ and $\avn{1324}$ have permuton limits of $\muJ$, whereas for $\avn{2413}$ the permuton limit is a bit of a mystery still.  Observing that permutations in $\avn{2413}$ have rotational symmetry shows that neither $\muJ$ nor $\muI$ could be the permuton limit for this case.  The permutation class $\avn{2413,3142}$ was shown to have a random permuton limit \cite{separable} which suggests the permuton limits for the larger classes ($\avn{2413}$ or $\avn{3142}$) might have more complicated structure than what we explore in this paper.      \\

\section*{Acknowledgements}
	This material is based upon work supported by the National Science Foundation Graduate Research Fellowship Program under Grant No. DGE-2040435.  The authors would like the thank a very generous referee who suggested many corrections and improvements.  We also would like to thank David Bevan for pointing out the conjectures in \cite{diagconj} to which our paper makes some partial progress.

\end{document}